\newtheorem{proposition}{Proposition}[section]
\newtheorem{theorem}[proposition]{Theorem}
\newtheorem{lemma}[proposition]{Lemma}
\newtheorem{definition}[proposition]{Definition}
\newtheorem{remark}[proposition]{Remark}
\newtheorem{example}[proposition]{Example}
\newenvironment{proof}{{\noindent \em Proof.}}{\hfill $\fbox{}$ \vspace*{5mm}}
\numberwithin{equation}{section}
\numberwithin{algorithm}{section}
\numberwithin{table}{section}
\numberwithin{figure}{section}
\newcommand{\bb}{{\bf b}}
\newcommand{\bc}{{\bf c}}
\newcommand{\bff}{{\bf f}}
\newcommand{\bm}{{\bf m}}
\newcommand{\bu}{{\bf u}}
\newcommand{\bx}{{\bf x}}
\newcommand{\by}{{\bf y}}
\newcommand{\ca}{{\cal A}}
\newcommand{\E}{{\mathbb{E}}}
\newcommand{\BP}{{\mathbb{P}}}
\newcommand{\Cmn}{{\mathbb{C}^{m\times n}}}
\newcommand{\R}{{\mathbb{R}}}
\newcommand{\Rn}{{\mathbb{R}^{n}}}
\newcommand{\Rmm}{{\mathbb{R}^{m\times m}}}
\newcommand{\Rmn}{{\mathbb{R}^{m\times n}}}
\newcommand{\Rnn}{{\mathbb{R}^{n\times n}}}
\newcommand{\bd}{\boldsymbol}
\newcommand{\diag}{{\rm diag}}
\newcommand{\rank}{{\rm rank}}
\newcommand{\BE}{\begin{equation}}
\newcommand{\EE}{\end{equation}}
\begin{document}
\title{Two-sided uniformly randomized GSVD for large-scale discrete ill-posed problems with Tikhonov regularizations}
\author{Weiwei Xu\thanks{School of Mathematics and Statistics, Nanjing University of Information Science and Technology, Nanjing 210044, the Peng Cheng Laboratory, Shenzhen 518055, and the Pazhou Laboratory (Huangpu), Guangzhou 510555, People's Republic of China (wwxu@nuist.edu.cn). }
\and Weijie Shen\thanks{School of Mathematics and Statistics, Nanjing University of Information Science and Technology, Nanjing 210044, People's Republic of China (swj@nuist.edu.cn). }
\and Zheng-Jian Bai\thanks{Corresponding author. School of Mathematical Sciences,  Xiamen University, Xiamen 361005, People's Republic of China (zjbai@xmu.edu.cn). The research of this author was partially supported by the National Natural Science Foundation of China grant 12371382.}}

\maketitle
\begin{abstract}
The generalized singular value decomposition (GSVD) is a powerful tool for solving discrete ill-posed problems. In this paper, we propose a  two-sided uniformly randomized GSVD algorithm for solving the large-scale discrete ill-posed problem with  the general Tikhonov regularization. Based on  two-sided uniform random sampling, the proposed  algorithm can improve the efficiency with less computing time and  memory requirement and obtain expected accuracy. The error analysis for the proposed  algorithm is also derived. Finally, we report some numerical examples to illustrate the efficiency of the proposed algorithm.
\end{abstract}
\section{Introduction}
We consider the numerical solution of a discrete ill-posed problem $A\bx=\bb$, where $A\in \mathbb{R}^{m\times n}$ is large and very ill-conditioned in the sense that its singular values decay gradually to zero and $\bb\in \mathbb{R}^{m}$. Such problem arises in many fields including  mathematical physics \cite{TA77}, engineering \cite{BP88,N86}, and  astronomy \cite{CB86}, etc.

There exists a large of literature on regularization methods for  discrete ill-posed problems (see for instance \cite{EH96,H10,H86}). In particular,
Tikhonov regularization method is a popular and effective method for solving discrete ill-posed problems. The general Tikhonov regularization takes the form of \cite{tikhonov}
\begin{equation} \label{tikhonov}
\min_{\bx\in \Rn} \|A\bx-\bb\|^2+\lambda^2\|L\bx\|^2,
\end{equation}
where $\lambda\in\R$ is a regularization parameter. Here,  $L\in\R^{p\times n}$ is generally a discrete approximation to some derivative operator.
When $\mathcal{N}(A) \bigcap \mathcal{N}(L)=\{{\bf 0}\}$, there exists a unique solution to \eqref{tikhonov} for all $\lambda>0$, which is given by
\BE\label{tsol}
\bx_\lambda=
([A^T,\lambda L^T]^T)^\dag [\bb^T,{\bf 0}^T]^T
=(A^TA+\lambda^2L^TL)^{-1}A^T\bb.
\EE

For small and medium scale discrete ill-posed problems, one may employ the truncated singular value decomposition
(TSVD) \cite{H87}, the generalized SVD (GSVD) and truncated GSVD (TGSVD) \cite{tikhonov}, and a scaling GSVD \cite{rescaleGSVD}. However, for large-scale ill-posed problems, the GSVD is not applicable due to its high computational complexity and storage requirement.

During the last decade, randomized algorithms have been developed for solving large-scale problem \eqref{tikhonov}. Xiang and Zou \cite{XZ13} presented a randomized SVD algorithm by employing some basic randomized algorithms from \cite{random}. Xiang and Zou \cite{xiang2015} further gave a randomized GSVD (RGSVD)  algorithm.  Wei et al. \cite{2016} provided a new RGSVD algorithm for the cases of $m\ge n$ and $m<n$. Jia et al.~\cite{jia2018} proposed a modified truncated RSVD (MTRSVD) algorithm.  Although these randomized algorithms are faster than classical regularization methods, they only reduce the number of rows or columns and the reduced problem is still large-scale. 

In this paper, we propose a  two-sided uniformly randomized GSVD algorithm for solving the large-scale discrete ill-posed problem with  the general Tikhonov regularization. We first present some preliminary results  on uniform random matrices. Then we introduce a blockwise adaptive randomized range finder for $A$ based on a uniform sampling technique. The novelty lies in the application of uniform random matrices for random sampling and an approximate basis for the range of $A$ can be obtained within the prescribed tolerance. Also,  we present a new GSVD algorithm for a  full-rank Grassman matrix pair (GMP).  Next, a two-sided uniformly randomized GSVD algorithm is provided for solving the  large-scale Tikhonov regularization problem \eqref{tikhonov}. Our  algorithm can reduce the computing time and the memory storage and achieve expected accuracy. The error analysis of the proposed algorithm is established. Finally,  some numerical examples are presented to illustrate the efficiency of the  proposed algorithm. 

The rest of this paper is organized as follows. In Section \ref{sec2}, we give some  preliminary results  on uniform random matrices. In Section \ref{sec3}, a randomized scheme for  the range approximation via uniform sampling is provided. In Section \ref{sec4}  we  present  a new GSVD algorithm for a full-rank  GMP.  In Section \ref{sec5}, we propose a  two-sided uniformly randomized GSVD algorithm  for the  large-scale Tikhonov regularization problem \eqref{tikhonov} and the corresponding  error analysis is also discussed. In Section \ref{sec6},  we give  some numerical examples to illustrate the efficiency of the proposed algorithm. Finally, we give some concluding remarks in Section \ref{sec7}.

Throughout this paper, we use the following notations. Let $\Cmn$ be the set of all $m\times n$ complex matrices, Let $\R$ and $\Rmn$ be the sets of all real numbers and all  $m\times n$ real matrices, respectively, where $\Rn=\R^{n\times 1}$. Let $\Rn$ be equipped with the  Euclidean inner product $\langle \cdot,\cdot\rangle$ with the induced norm  $\|\cdot\|$.  Denote by $\mathcal{R}(\cdot)$ and  $\mathcal{N}(\cdot)$  the range space and null space of a matrix. Let $\|\cdot\|_2$ and $\|\cdot\|_F$ be the spectral norm and Frobenius norm of a matrix, respectively.   Let $\lfloor\cdot\rfloor$ be the largest integer less than or equal to a real number. Let $I_n$ be the $n\times n$ identity matrix. Let $B^T$, $B^{-1}$, $B^\dag$, $(B)_{ij}$ (or $b_{ij}$), $B(i,:)$, $B(:,j)$, $B(i:j,:)$, $B(:,i:j)$, and $B(i_1:i_2,j_1:j_2)$  denote the transpose, the inverse, and the Moore--Penrose inverse, the $(i,j)$ entry, the $i$-th row, the $j$-th column, a matrix composed of rows $i$ to $j$, a matrix composed of columns $i$ to $j$, and  a submatrix comprising the intersection of rows $i_1$ to $i_2$ and columns $j_1$ to $j_2$, of a matrix $B$, respectively.  Let $\sigma_1(B)\ge\sigma_2(B)\ge\cdots\ge\sigma_{\min\{m,n\}}(B)\ge 0$ be the singular values of a matrix $B\in\Rmn$. A matrix $B\in\Rmn$ is called a uniform random matrix if all its entries are independent  uniform random variables  over an interval. Let $\BP\{\cdot\}$ and  $\E(\cdot)$ denote the probability of an event and the expectation of a random variable, respectively. 
\section{Preliminaries}\label{sec2}
In this section we present some preliminary results  on uniform random matrices.
We first derive the following result on a linear transformation of a random matrix with some special properties.
\begin{lemma}\label{t1}
Let $B\in \R^{m\times \ell}$ and $C\in \R^{m\times n}$ be two constant matrices with $C$ being of full row rank.  Let $\Omega\in\R^{n\times \ell}$  {\rm($n\ge \ell$)} be a random matrix whose entries are all independent random variables such that, for any $c\in\R$,
\begin{equation}\label{e23100101}
\BP\{\omega_{ij}=c\}=0, \quad i=1,\ldots, n,\quad  j=1,\ldots, \ell.
\end{equation}
Then for any $s\times t$ submatrix $\Phi$ of the random matrix $M=B+C \Omega$, one has
$\rank(\Phi)=\min{\left\{s,t\right\}}$ with probability one, where $1\leq s \leq m$  and $1\leq t\leq \ell$.
\end{lemma}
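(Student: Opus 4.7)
The plan is to reduce the claim to the classical fact that if $p\in\R[X_1,\ldots,X_k]$ is a nonzero polynomial and $X_1,\ldots,X_k$ are independent real random variables with no atoms, then $\BP\{p(X_1,\ldots,X_k)=0\}=0$. I would establish this auxiliary fact by induction on $k$: the base case uses that a nonzero univariate polynomial has only finitely many roots (and each is hit with probability zero by hypothesis \eqref{e23100101}), and the inductive step follows via Fubini by writing $p$ as a polynomial in $X_1$ whose coefficients are polynomials in $X_2,\ldots,X_k$, at least one of which is a nonzero polynomial and hence nonvanishing almost surely by induction.

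Next I would fix row indices $I\subseteq\{1,\ldots,m\}$ and column indices $J\subseteq\{1,\ldots,\ell\}$ with $|I|=s$ and $|J|=t$, and set $r=\min\{s,t\}$. Writing the submatrix as $\Phi=B(I,J)+C(I,:)\Omega(:,J)$, each entry of $\Phi$ is a linear polynomial in the entries of $\Omega$, so every $r\times r$ minor of $\Phi$ is a polynomial in these entries and the event $\{\rank(\Phi)<r\}$ is precisely the simultaneous vanishing of all such minors. To guarantee that at least one such minor is a nonzero polynomial, it suffices to exhibit a single deterministic matrix $\Omega^{\ast}$ for which $\rank(\Phi(\Omega^{\ast}))=r$.

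This existence step is the heart of the argument, and it is where the full-row-rank hypothesis on $C$ enters. Since $C\in\R^{m\times n}$ has rank $m$, the matrix $CC^T$ is invertible and $C^{\dag}:=C^T(CC^T)^{-1}$ is a right inverse, i.e.\ $CC^{\dag}=I_m$. I would pick any $D\in\R^{m\times\ell}$ whose submatrix $D(I,J)$ has rank $r$, for instance the $0/1$ matrix with a $1$ at positions $(i_k,j_k)$ for $k=1,\ldots,r$ and zero elsewhere, where $I=\{i_1,\ldots,i_s\}$ and $J=\{j_1,\ldots,j_t\}$. Setting $\Omega^{\ast}:=C^{\dag}(D-B)$ then yields $B+C\Omega^{\ast}=D$, so $\Phi(\Omega^{\ast})=D(I,J)$ has rank $r$, as required.

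Combining the two ingredients, the lemma of the first paragraph applied to the nonzero minor polynomial $p$ identified above (in the independent atomless entries of $\Omega$) gives $\BP\{p(\Omega)=0\}=0$, so $\BP\{\rank(\Phi)<r\}=0$; a union bound over the finitely many triples $(s,t,I,J)$ then yields the conclusion for all submatrices simultaneously. The main obstacle I anticipate is formulating the polynomial-nonvanishing lemma cleanly and verifying that the construction of $\Omega^{\ast}$ really does realize rank $r$ on the chosen submatrix; once those are in place the rest of the argument is essentially mechanical.
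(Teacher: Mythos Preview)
Your proposal is correct and takes a genuinely different route from the paper's own argument.

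The paper proves the lemma by induction on the size $s$ of a square submatrix. For $s=1$ it observes that $m_{ij}=b_{ij}+\bc_i^T\bd\omega_j$ with $\bc_i\neq\mathbf{0}$ (by the full-row-rank hypothesis), so one can solve for a single entry of $\bd\omega_j$ and invoke atomlessness. For the inductive step it Laplace-expands $\det(\Phi)$ along its first column: the cofactors involve only columns $2,\ldots,s$ of $\Omega$ and are nonzero a.s.\ by induction, while the first column of $\Phi$ is an affine function of $\bd\omega_1$ with full-rank coefficient matrix $C_k$ (again from the hypothesis on $C$). Conditioning on all entries but one in $\bd\omega_1$ then finishes.

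Your approach instead isolates a clean ``Schwartz--Zippel--type'' lemma (a nonzero multivariate polynomial in independent atomless variables vanishes with probability zero) and reduces the problem to exhibiting one deterministic $\Omega^\ast$ that realizes full rank on the chosen submatrix; the right inverse $C^\dag=C^T(CC^T)^{-1}$ supplies this immediately. This is more modular and conceptually transparent: the probabilistic content is concentrated in a reusable lemma, and the role of the full-row-rank hypothesis is made completely explicit in a single construction. The paper's argument, on the other hand, is self-contained (no Fubini needed) and stays closer to the combinatorics of the determinant. Both exploit the hypothesis on $C$ at essentially the same place---to guarantee that the relevant linear map is onto---but your construction via $C^\dag$ makes this step a one-liner rather than an inductive bookkeeping exercise.
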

\begin{proof}
 We only need to prove that for any $s\times s$ submatrix $\Phi$ of the random matrix $M$, we have
\begin{equation}\label{e23100203}
\BP\{\det(\Phi)\neq 0\}=1
\end{equation}
with probability one,
where $1\leq s\leq \min{\left\{m,\ell\right\}}$.

We show \eqref{e23100203} by the mathematical induction. We first show that \eqref{e23100203} holds for $s=1$. Let $C=[\bc_1,\ldots,\bc_m]^T$ and $\Omega=[\bd\omega_1,\ldots, \bd\omega_\ell]$. Then we have $m_{ij}=b_{ij}+\bc_i^T \bd\omega_j$ for all $1\le i\le m$ and $1\le j\le \ell$. Let $1\le i\le m$ and $1\le j\le \ell$ be fixed. By hypothesis, $\rank(C)=m$, which indicates $\bc_i\neq {\bf 0}$. Without loss of generality, we assume that $c_{i1}\neq 0$. Thus,
\[
\{m_{ij}=0\}=\Big\{\omega_{1j}=-c_{i1}^{-1}\Big(b_{ij}+\sum\limits_{l=2}^n c_{il} \omega_{lj}\Big)\Big \}.
\]
By hypothesis, $\omega_{lj}$ are independent random variables  for all $1\le l\le n$. This, together with  (\ref{e23100101}), implies that $\BP\{m_{ij}=0\}=0$. This means that \eqref{e23100203} holds for $s=1$ with probability one.

Next, assume that  \eqref{e23100203} holds for $s=k-1$. We show that  \eqref{e23100203} holds for  $s=k$. Without loss of generality, suppose that $\Phi$  is the submatrix of $M$ comprising  the elements at the intersection of rows $1$ to $k$ and columns $1$ to $k$. By the Laplace expansion, we have
\begin{equation}\label{e23100204}
\det(\Phi)=\sum\limits_{i=1}^{k}(-1)^{i+1}\phi_{i1}\det\left(\Phi_{i1}\right)=\sum\limits_{i=1}^{k}(-1)^{i+1}m_{i1}\det\left(\Phi_{i1}\right),
\end{equation}
where $\Phi_{i1}$ is the $(k-1)\times (k-1)$ submatrix of $\Phi$ obtained by deleting row $i$ and column $1$. Let $C_k$ be the $k\times n$ submatrix of $C$ composed of rows $1$ to $k$.  Then $\rank(C_k)=k$ since $C$ is of full row rank. Without loss of generality, we assume that
$C_k=[C_{k1}, C_{k2}]$, where $C_{k1}\in\R^{k\times k}$ is nonsingular.
Also, let
\[
\bd\rho=\left(\begin{matrix}(-1)^{1+1}\det(\Phi_{11})\\ \vdots\\ (-1)^{k+1}\det(\Phi_{k1})\\\end{matrix}\right), \quad \tilde{\bm}_k=\left(\begin{matrix}m_{11}\\ \vdots\\ m_{k1}\\\end{matrix}\right), \quad
\tilde{\bb}_k=\left(\begin{matrix}b_{11}\\ \vdots\\ b_{k1}\\\end{matrix}\right),
\]
and $\bd\omega_1=[(\bd\omega_1^{(1)} )^T,  (\bd\omega_1^{(2)})^T]^T$ with $\bd\omega_1^{(1)}\in\R^k$.
We note that $\tilde{\bm}_k=\tilde{\bb}_k+C_k\bd\omega_1=\tilde{\bb}_k+C_{k1}\bd\omega_1^{(1)} +C_{k2}\bd\omega_1^{(2)}$. From (\ref{e23100204}) we have
\[
\det(\Phi)=\bd\rho^T \tilde{\bm}_k=\bd\rho^T (\tilde{\bb}_k+C_{k2}\bd\omega_{1}^{(2)})+\bd\rho^T C_{k1}\bd\omega_1^{(1)}.
\]

By inductive hypothesis, we know that the determinants $\{\det(\Phi_{i1})\}_{i=1}^k$ are all nonzero with probability one and thus  $\bd\rho\neq{\bf 0}$ with the probability one.  Then  $\bd\sigma^T=\bd\rho^T C_{k1}\neq{\bf 0}^T$ since $C_{k1}$ is  nonsingular. Without loss of generality, assume that  $\bd\sigma=\left(\sigma_1,\ldots,\sigma_k\right)^T$, where $\sigma_1\neq 0$. Thus,
\[
\{\bd\rho^T \tilde{\bm}_k=0\}=\Big\{\omega_{11}=-\sigma_1^{-1}\Big(\bd\rho^T (\tilde{\bb}_k+C_{k2}\bd\omega_{1}^{(2)})+\sum\limits_{i=2}^k \sigma_i \omega_{i1}\Big)\Big\}.
\]
By hypothesis, $\omega_{i1}$ are independent random variables  for all $1\le i\le n$. This, together with  (\ref{e23100101}), implies that $\BP\{\det(\Phi)=0\}=\BP\{\bd\rho^T \tilde{\bm}_k=0\}=0$. This means that \eqref{e23100203} holds for $s=k$ with probability one. \end{proof}
\begin{remark} We observe from Lemma  {\rm\ref{t1}} that the condition \eqref{e23100101} is satisfied if $\Omega$ is a  random matrix whose entries   are all independent and continuous random variables {\rm(}e.g.,  a Gaussian/uniform random matrix{\rm)}.
\end{remark}

The following lemma gives the relationship between the rank of a matrix and that of its a sampling matrix drawn by a uniform random test matrix.
\begin{lemma}\label{t2}
Let $B\in \Rmn$ and $C\in \R^{n\times \ell}$ be two constant matrices with $n\ge \ell$. Suppose that $\Omega\in\mathbb{R}^{n\times \ell}$ is a uniform random matrix with $\omega_{ij}\in[-a,a]$ for some $a>0$.
Then
$
\rank(B(C+\Omega))= \min\{\rank(B), \ell\}
$
with probability one.
\end{lemma}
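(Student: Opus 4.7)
The plan is to reduce the statement to Lemma~\ref{t1} by peeling off the rank-deficient part of $B$ on the left. Let $r = \rank(B)$ and take a thin factorization $B = U\Sigma V^T$ where $U \in \R^{m \times r}$ has orthonormal columns, $\Sigma \in \R^{r \times r}$ is nonsingular, and $V \in \R^{n \times r}$ has orthonormal columns. Since left multiplication by $U\Sigma$ preserves rank, we obtain
\[
\rank\bigl(B(C+\Omega)\bigr) = \rank\bigl(V^T C + V^T \Omega\bigr).
\]

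Next I would set $\widetilde{B} := V^T C \in \R^{r \times \ell}$ and $\widetilde{C} := V^T \in \R^{r \times n}$, both constant, so that $V^T C + V^T \Omega = \widetilde{B} + \widetilde{C}\,\Omega$. Because $V$ has orthonormal columns, $\widetilde{C} = V^T$ has full row rank $r$, and the hypothesis $n \ge \ell$ carries over from the lemma's assumption. Moreover, since each $\omega_{ij}$ is uniform on $[-a,a]$, it is a continuous random variable, so the point-mass condition \eqref{e23100101} is satisfied (as noted in the remark after Lemma~\ref{t1}).

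Applying Lemma~\ref{t1} to $\widetilde{M} := \widetilde{B} + \widetilde{C}\,\Omega \in \R^{r \times \ell}$ with $s = t = \min\{r,\ell\}$, I can select an $s \times s$ submatrix $\Phi$ of $\widetilde{M}$ that is nonsingular with probability one. This forces $\rank(\widetilde{M}) \ge \min\{r,\ell\}$; the reverse inequality $\rank(\widetilde{M}) \le \min\{r,\ell\}$ is automatic from the dimensions of $\widetilde{M}$. Combining,
\[
\rank\bigl(B(C+\Omega)\bigr) = \rank(\widetilde{M}) = \min\{r,\ell\} = \min\{\rank(B),\ell\}
\]
with probability one, which is exactly the desired conclusion.

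The only subtle point is the initial reduction: one must be careful that the rank factorization of $B$ is applied on the correct side so that the random matrix $\Omega$ is acted upon by a full-row-rank constant matrix, which is what Lemma~\ref{t1} requires; the SVD-based choice $\widetilde{C} = V^T$ handles this automatically. The rest is a direct invocation of Lemma~\ref{t1} and a dimension count.
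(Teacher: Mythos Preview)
Your proof is correct and follows essentially the same approach as the paper's: factor $B$ so as to strip off the rank-preserving left factor and reduce to a matrix of the form (constant) $+$ (full-row-rank constant)$\cdot\Omega$, then invoke Lemma~\ref{t1}. The only cosmetic difference is that the paper uses the normal form $B=\widehat{P}\bigl[\begin{smallmatrix}I_r&0\\0&0\end{smallmatrix}\bigr]\widehat{Q}$ and works with $\widehat{Q}_1=\widehat{Q}(1{:}r,:)$, whereas you use the thin SVD and work with $V^T$; in both cases the resulting $r\times n$ factor has full row rank, and Lemma~\ref{t1} applied to the full $r\times\ell$ matrix yields $\rank=\min\{r,\ell\}$ directly.
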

\begin{proof}
Let $\rank(B)=r\le\min\{m,n\}$. Then we can reduce $B$ to the following normal form:
\[
B = \widehat{P}\left[\begin{matrix}I_r&0\\0&0\\\end{matrix}\right]\widehat{Q},\quad\mbox{$\widehat{P}\in\Rmm$ and $\widehat{Q}\in\Rnn$ are two nonsingular  matrices}.
\]
Denote $\widehat{Q}=[\widehat{Q}_{1}^T, \widehat{Q}_{2}^T]^T$, where $\widehat{Q}_{1}\in \R^{r\times n}$ with $\rank(\widehat{Q}_1)=r$. Then we have $\rank(B(C+\Omega))=\rank (\widehat{Q}_1(C+\Omega))=\rank (\widehat{Q}_1C+\widehat{Q}_1\Omega)$.  Since $\widehat{Q}_1$ is of full row rank, using Lemma \ref{t1}, we know that $\rank(B(C+\Omega))=\rank (\widehat{Q}_1C+\widehat{Q}_1\Omega)=\min\{r,\ell\}$ with probability one.
\end{proof}

On the rotation of a uniform random matrix, we have the following property.
\begin{lemma}\label{unif:rotation}
Let $C\in\R^{n\times k}$ be a constant matrix, which has orthonormal columns $(k\le n)$. Suppose that $\Omega\in\mathbb{R}^{n\times \ell}$ is a uniform random matrix with $\omega_{ij}\in[-a,a]$ for some $a>0$. Let $H=C^T\Omega$. Then $\E (h_{ij})=0$ and $\E (h_{ij}^2) = a^2/3$ for all $1\leq i \leq k$ and $1\leq j \leq \ell$.
\end{lemma}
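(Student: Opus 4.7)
The plan is to unpack the entry $h_{ij}$ of $H = C^T \Omega$ in terms of the columns of $C$ and $\Omega$, then exploit (i) the zero mean and known variance of a uniform random variable on $[-a,a]$, and (ii) the mutual independence of the entries of $\Omega$.

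First I would write $h_{ij} = \sum_{s=1}^{n} c_{si}\,\omega_{sj}$, i.e.\ $h_{ij} = (C(:,i))^T \Omega(:,j)$. For the first moment, linearity of expectation gives $\E(h_{ij}) = \sum_{s=1}^n c_{si}\,\E(\omega_{sj})$, and since each $\omega_{sj}$ is uniform on the symmetric interval $[-a,a]$, $\E(\omega_{sj}) = 0$, yielding $\E(h_{ij}) = 0$.

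For the second moment, I would expand
\[
h_{ij}^2 = \sum_{s=1}^n \sum_{t=1}^n c_{si} c_{ti}\,\omega_{sj}\omega_{tj},
\]
take expectation, and split the double sum into the diagonal ($s=t$) and off-diagonal ($s\neq t$) parts. Independence of the entries of $\Omega$ forces $\E(\omega_{sj}\omega_{tj}) = \E(\omega_{sj})\E(\omega_{tj}) = 0$ for $s\neq t$, while for $s=t$ a direct integration of the uniform density gives $\E(\omega_{sj}^2) = \var(\omega_{sj}) = a^2/3$. Thus $\E(h_{ij}^2) = (a^2/3)\sum_{s=1}^n c_{si}^2 = (a^2/3)\,\|C(:,i)\|^2$, and using that $C$ has orthonormal columns so $\|C(:,i)\| = 1$, we obtain $\E(h_{ij}^2) = a^2/3$.

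There is no real obstacle here; the statement is essentially a moment computation for an inner product of a fixed unit vector against an independent-coordinate random vector with uniform entries. The only point worth being careful about is explicitly invoking the independence of distinct entries of $\Omega$ to kill the off-diagonal cross terms, and recalling the variance $a^2/3$ of a $U[-a,a]$ random variable. Note that orthonormality between different columns of $C$ is not even used; only the unit-norm property of each column $C(:,i)$ enters.
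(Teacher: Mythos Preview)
Your proposal is correct and follows essentially the same approach as the paper: both expand $h_{ij}=\sum_{s} c_{si}\omega_{sj}$, use $\E(\omega_{sj})=0$, $\E(\omega_{sj}^2)=a^2/3$, independence to kill the cross terms, and finish with $\sum_s c_{si}^2=1$. Your observation that only the unit-norm property of each column of $C$ is needed (not orthogonality between columns) is accurate and not made explicit in the paper.
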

\begin{proof}
Since $\Omega$ is an $n\times \ell$ uniform random matrix with $\omega_{ij}\in[-a,a]$ for some $a>0$, we know that $\mathbb{E}(\omega_{ij})=0$, $\mathbb{E}(\omega_{ij}^2)=a^2/3$ for all $1\leq i \leq n$  and $1\leq j\leq \ell$, and  $\mathbb{E}(\omega_{ij}\omega_{st})=\E(\omega_{ij})\E(\omega_{st})=0$ for all $1\leq i,s \leq n$  and $1\leq j,t\leq \ell$ with $(i,j)\neq(s,t)$.

From $H=C^T\Omega$ we have $\E (H) = C^T\E(\Omega)=0$. In addition, for any $1\le i\le k$ and $1\le j\le \ell$, we have
\[
\E (h_{ij}^2) = \E\Big(\Big(\sum_{l=1}^nc_{li}\omega_{lj}\Big)^2 \Big)=\E\Big(\sum_{l=1}^nc_{li}^2\omega_{lj}^2\Big)= \sum_{l=1}^nc_{li}^2\E(\omega_{lj}^2)= \frac{a^2}{3}\sum_{l=1}^nc_{li}^2= \frac{a^2}{3}.
\]
\end{proof}

Finally, on the expected Frobenius norm of a pre-orthogonalized and scaled uniform random matrix, we have the following result.
\begin{proposition}\label{sgt1}
Let $F\in\R^{m\times k}$, $C\in\R^{n\times k}$ and $G\in\R^{\ell \times q}$be  constant matrices, where $C$ has orthonormal columns. Suppose $\Omega\in\mathbb{R}^{n\times \ell}$ is a uniform random matrix with $\omega_{ij}\in[-\sqrt{3},\sqrt{3}]$. Then $\mathbb{E}(\|FHG\|_{F}^{2}) = \|F\|_{F}^{2}\|G\|_{F}^{2}$ for  $H=C^T\Omega$.
\end{proposition}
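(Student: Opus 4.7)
The plan is to reduce everything to second-moment information about the entries $h_{ij}$ of $H=C^{T}\Omega$, and then to expand $\|FHG\|_{F}^{2}$ entry-wise so that only diagonal covariances survive.

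First I would sharpen Lemma \ref{unif:rotation} (applied with $a=\sqrt{3}$, which gives $\mathbb{E}(h_{ij})=0$ and $\mathbb{E}(h_{ij}^{2})=1$) to a full covariance identity: for all valid indices,
\[
\mathbb{E}(h_{ij}h_{st}) \;=\; \delta_{is}\,\delta_{jt}.
\]
This is the key algebraic input. To prove it, I write $h_{ij}=\sum_{l}c_{li}\omega_{lj}$ and $h_{st}=\sum_{p}c_{ps}\omega_{pt}$, so
\[
\mathbb{E}(h_{ij}h_{st}) \;=\; \sum_{l,p} c_{li}c_{ps}\,\mathbb{E}(\omega_{lj}\omega_{pt}).
\]
Since the entries of $\Omega$ are independent uniform variables on $[-\sqrt{3},\sqrt{3}]$, we have $\mathbb{E}(\omega_{lj}\omega_{pt})=0$ unless $(l,j)=(p,t)$, and equals $1$ in that case. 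When $j\ne t$ every term vanishes; when $j=t$ the sum collapses to $\sum_{l}c_{li}c_{ls}=(C^{T}C)_{is}=\delta_{is}$, because $C$ has orthonormal columns. This is exactly where the orthonormality hypothesis on $C$ is used.

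Next I would expand the Frobenius norm entry-wise:
\[
\|FHG\|_{F}^{2}
\;=\;\sum_{a,b}\Bigl(\sum_{i,j} F_{ai}\,H_{ij}\,G_{jb}\Bigr)^{2}
\;=\;\sum_{a,b}\sum_{i,j,i',j'} F_{ai}F_{ai'}\,G_{jb}G_{j'b}\,H_{ij}H_{i'j'}.
\]
Taking expectations and substituting $\mathbb{E}(H_{ij}H_{i'j'})=\delta_{ii'}\delta_{jj'}$ kills the cross terms and leaves
\[
\mathbb{E}\bigl(\|FHG\|_{F}^{2}\bigr)
\;=\;\sum_{a,b}\sum_{i,j} F_{ai}^{2}\,G_{jb}^{2}
\;=\;\Bigl(\sum_{a,i}F_{ai}^{2}\Bigr)\Bigl(\sum_{b,j}G_{jb}^{2}\Bigr)
\;=\;\|F\|_{F}^{2}\,\|G\|_{F}^{2}.
\]

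The only real obstacle is the covariance identity $\mathbb{E}(h_{ij}h_{st})=\delta_{is}\delta_{jt}$; once that is in hand, the rest is a routine expansion and the orthonormality of $C$ is the single ingredient that makes the variance of $h_{ij}$ equal $1$ (via the choice $a=\sqrt{3}$) and simultaneously decorrelates different rows of $H$. No independence of the $h_{ij}$ is needed—only their second-moment structure—so the Gaussian-style argument goes through unchanged for uniform entries.
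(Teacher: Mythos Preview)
Your proof is correct and follows essentially the same entry-wise expansion as the paper's own argument. In fact, you are more careful: the paper jumps directly from $\bigl(\sum_{l,r}f_{il}h_{lr}g_{rj}\bigr)^{2}$ to the diagonal sum $\sum_{l,r}f_{il}^{2}\,\E(h_{lr}^{2})\,g_{rj}^{2}$ after only citing $\E(h_{ij})=0$ and $\E(h_{ij}^{2})=1$, whereas you explicitly establish the full covariance identity $\E(h_{ij}h_{st})=\delta_{is}\delta_{jt}$ (using the orthonormality of $C$) that is actually needed to kill the cross terms.
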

\begin{proof}
It is easy to check that $\mathbb{E}(\omega_{lr})=0$ and $\mathbb{E}(\omega_{lr}^2)=1$ for all $1\le l\le n$ and $1\le r\le \ell$, and  $\mathbb{E}(\omega_{lr}\omega_{ab})=\E(\omega_{lr})\E(\omega_{ab})=0$ for all $1\leq l,a \leq n$  and $1\leq r,b\leq \ell$ with $(l,r)\neq(a,b)$. We note that $H=C^T\Omega$.  By Lemma \ref{unif:rotation} we have $\E (h_{ij})=0$ and $\E (h_{ij}^2) = 1$ for all $1\leq i \leq k$ and $1\leq j \leq \ell$.
Thus,
\begin{eqnarray*}
&& \mathbb{E} (\|FHG\|_{F}^{2})= \mathbb{E}\Big(\sum\limits_{i=1}^{m}\sum\limits_{j=1}^{q}\Big(\sum\limits_{l=1}^{k}\sum\limits_{r=1}^{\ell}f_{il}h_{lr}g_{rj}\Big)^{2}\Big)
 =\sum\limits_{i=1}^{m}\sum\limits_{j=1}^{q}\sum\limits_{l=1}^{k}\sum\limits_{r=1}^{\ell}\E\Big(f_{il}^2h_{lr}^2g_{rj}^{2}\Big) \\
&& \quad=\sum\limits_{i=1}^{m}\sum\limits_{j=1}^{q}\sum\limits_{l=1}^{k}\sum\limits_{r=1}^{\ell}\Big(f_{il}^2\E(h_{lr}^2)g_{rj}^{2}\Big) =\sum\limits_{i=1}^{m}\sum\limits_{j=1}^{q}\sum\limits_{l=1}^{k}\sum\limits_{r=1}^{\ell}(f_{il}^2g_{rj}^{2})
=\|F\|_{F}^{2} \|G\|_{F}^{2}.\nonumber
\end{eqnarray*}
\end{proof}

\section{A randomized scheme for  the range approximation via uniform sampling}\label{sec3}
In this section, we present a randomized scheme for  the range approximation via uniform sampling.
In \cite{random},  a randomized  range finder was proposed  as follows: Given a matrix $A\in\Rmn$, an integer $\ell$, draw a random test matrix $\Omega\in\R^{n\times \ell}$,  form the $m\times \ell$ matrix $Y=A\Omega$ and  construct an $m\times \ell$ matrix $Q$ with orthonormal columns, which can be obtained by the QR factorization $Y=QR$. Some tail bounds for the approximation error $\|A-QQ^TA\|_2$ or $\|A-QQ^TA\|_F$ were provided if $\Omega$ is a Gaussian test  matrix
or a SRFT test matrix.

As in \cite{random, random2}, we can find some integer $\ell\le\min\{m,n\}$ and an $m\times \ell$ real orthonormal matrix $Q^{(\ell)}$ such that $\|A-Q^{(\ell)}(Q^{(\ell)})^TA\|_F\le \epsilon$ for some tolerance $\epsilon>0$ by using  a blockwise adaptive  randomized range finder via uniform test matrices, which is described as Algorithm \ref{alg:alg1}.
\begin{algorithm}[ht]
\caption{Blockwise adaptive uniformly randomized range finder}\label{alg:alg1}
{\bf Input:} A matrix $A\in \R^{m\times n}$, a tolerance $\epsilon>0$, and a blocksize $1\le b<n$. Let $s=\lfloor\frac{n}{b}\rfloor$ and $\bff\equiv (f_1,\ldots,f_{s+1})^T=(b,\ldots,b,n-sb)^T\in\R^{s+1}$.\\
{\bf Output:} A matrix $Q$ with orthonormal  columns such that $\|A-QQ^TA\|_F\le\epsilon$ and $Q^TA$ has full row rank  with probability one.\\
\begin{algorithmic}[1]
\STATE $Q_{0}=[]$ and $i=1$.\\
\STATE \textbf{while} $i<s+2$ \\
\STATE Draw a uniform random matrix $\Omega_i\in\R^{n\times f_i}$  with  $\omega_{ij}\in [-\sqrt{3},\sqrt{3}]$. \\
\STATE Form the matrix product $Y_i=A\Omega_i$. \\
\STATE Set $Y_i=(I-Q_{i-1}Q_{i-1}^T)Y_i$. \\
\STATE Compute the reduced QR decomposition $Y_i=P_iR_i$.
\STATE \textbf{if} $\min_{1\le l\le b}|(R_i)_{ll}| > \epsilon$
\STATE Set $Q_{i}=[Q_{i-1},P_i]$.
\STATE  \textbf{else}
\STATE Set $l_i=\min\{l \;|\; |(R_i)_{ll}| \le \epsilon\}$.
\STATE Set $Q_{i}=[Q_{i-1},P_i(:,1:l_i-1)]$ if $l_i>1$ and $Q_{i}=Q_{i-1}$ if $l_i=1$.
\STATE return $Q=Q_i$.
\STATE \textbf{end if}
\STATE $i=i+1$.
\STATE \textbf{if} $n-ib=0$ \textbf{then}
\STATE stop
\STATE \textbf{end if}
\STATE \textbf{end while}
\end{algorithmic}
\end{algorithm}

On  Algorithm \ref{alg:alg1}, we have the following result.
\begin{theorem}
Suppose  $\rank(A)\ge 1$.  For Algorithm  {\rm\ref{alg:alg1}}, there exist an integer $i\ge 1$ and a uniform random matrix $\Omega$ such that   $A\Omega$ admits the reduced QR decomposition $A\Omega=QR$ with $Q$ having orthonormal columns, $\|(I-QQ^T)A\|_F\le \epsilon$, and $Q^TA$ has full row rank  with probability one.
\end{theorem}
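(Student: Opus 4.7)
The plan is to treat the theorem as an existence statement: I would exhibit an integer $i\ge 1$ and a specific realization $\Omega$ of a uniform random matrix, arising from an appropriate run of Algorithm \ref{alg:alg1}, for which the concatenated draws and the algorithm's output $Q$ satisfy the four desired properties. The driving idea is that once $\mathcal{R}(Q_{j-1})=\mathcal{R}(A)$, the residual $(I-Q_{j-1}Q_{j-1}^T)A$ vanishes, the $\epsilon$-bound becomes automatic, and Algorithm \ref{alg:alg1} is forced into the ``else'' branch on the next iteration with $l_j=1$.

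I would first verify orthonormality and the QR structure. At every iteration, the new columns appended to $Q_{i-1}$ come from $P_i$, the orthonormal $Q$-factor of the reduced QR of $Y_i=(I-Q_{i-1}Q_{i-1}^T)A\Omega_i$; these columns lie in $\mathcal{R}(Q_{i-1})^{\perp}$, so concatenation preserves orthonormality in both branches. Setting $\Omega=[\Omega_1,\ldots,\Omega_i]$, the per-iteration identities $A\Omega_j=Q_{j-1}(Q_{j-1}^TA\Omega_j)+P_jR_j$ stack into $A\Omega=QR$ with $R$ block upper triangular (diagonal blocks $R_j$, super-diagonal blocks coming from $Q_{j-1}^TA\Omega_k$), hence upper triangular overall; this yields the reduced QR decomposition of $A\Omega$.

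For the residual bound, at each iteration $j$ with $\mathcal{R}(Q_{j-1})\subsetneq\mathcal{R}(A)$ the matrix $B:=(I-Q_{j-1}Q_{j-1}^T)A$ has positive rank, and Lemma \ref{t2} (with $C=0$) gives $\rank(Y_j)=\min\{\rank(B),f_j\}$ with probability one, so the leading diagonals of $R_j$ are almost surely nonzero. I would then argue that one may select a realization $\Omega_j\in[-\sqrt{3},\sqrt{3}]^{n\times f_j}$ for which those diagonals all exceed $\epsilon$, for instance by aligning the columns of $B\Omega_j$ with the leading right singular directions of $B$ at maximal feasible magnitude within the uniform support. This keeps the ``if'' branch firing and strictly enlarges $\mathcal{R}(Q_j)$ inside $\mathcal{R}(A)$; after at most $\lceil \rank(A)/b\rceil$ such iterations one reaches $\mathcal{R}(Q_{j-1})=\mathcal{R}(A)$, whereupon the next step has $Y_j=0$, the ``else'' branch triggers with $l_j=1$, and the algorithm returns $Q=Q_{j-1}$ with $(I-QQ^T)A=0$, yielding $\|(I-QQ^T)A\|_F=0\le\epsilon$.

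Finally, $\mathcal{R}(Q)\subseteq\mathcal{R}(A)$ by construction, so any $x$ with $A^TQx=0$ satisfies $Qx\in\mathcal{N}(A^T)\cap\mathcal{R}(Q)\subseteq\mathcal{R}(A)^{\perp}\cap\mathcal{R}(A)=\{0\}$; orthonormality of $Q$ then forces $x=0$, proving that $Q^TA$ has full row rank. The principal obstacle in this plan is the steering step—showing that inside $[-\sqrt{3},\sqrt{3}]^{n\times f_j}$ there is a realization of $\Omega_j$ making every relevant diagonal of $R_j$ exceed $\epsilon$. Since these diagonals depend continuously on $\Omega_j$ and are generically nonzero by Lemma \ref{t2}, the ``good'' set of realizations is open, and a constructive choice via the SVD of $B$ and an explicit scaling within the uniform support would make this precise; formalising this step is the main technical point the full proof must settle.
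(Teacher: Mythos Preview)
Your verification of the block-QR structure is fine, and the full-row-rank argument via $\mathcal{R}(Q)\subseteq\mathcal{R}(A)$ is actually cleaner than the paper's repeated appeals to Lemma~\ref{t2}. The gap is precisely where you flag it, and your proposed resolution does not close it. With the SVD-aligned choice $\Omega_j(:,l)=c\,v_l(B)$ one obtains $(R_j)_{ll}=c\,\sigma_l(B)$ with $c\le\sqrt{3}/\|v_l(B)\|_\infty$. Take $A=\mathrm{diag}(1,\delta,\ldots,\delta)\in\R^{n\times n}$ with $\epsilon/\sqrt{n-1}<\delta\le\epsilon/\sqrt{3}$ (any $n\ge 5$): here $v_2=e_2$, so $c\le\sqrt{3}$ and your steering gives $(R_1)_{22}=\sqrt{3}\,\delta\le\epsilon$, forcing termination at $Q=e_1$ with residual $\sqrt{n-1}\,\delta>\epsilon$. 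The continuity/openness remark does not help, because the obstruction is an \emph{upper} bound on $(R_j)_{ll}$ over the compact cube $[-\sqrt{3},\sqrt{3}]^n$; the set of ``good'' $\Omega_j(:,l)$ is empty, not merely thin. More generally, driving the algorithm all the way to $\mathcal{R}(Q)=\mathcal{R}(A)$ is unattainable whenever the residual $M$ has $\sup_{\omega\in[-\sqrt{3},\sqrt{3}]^n}\|M\omega\|\le\epsilon$ while $M\ne 0$, which happens for any $A$ with sufficiently small nonzero singular values.

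The paper's route is different: it never targets $\mathcal{R}(Q)=\mathcal{R}(A)$ but uses Proposition~\ref{sgt1} to write $\|(I-QQ^T)A\|_F^2=\mathbb{E}_{\omega}\|(I-QQ^T)A\,\omega\|^2$ for a fresh uniform column $\omega$ independent of $Q$, thereby tying the Frobenius residual directly to the terminating diagonal $(R_i)_{l_il_i}=\|(I-QQ^T)A\,\omega\|$. Your framework can be repaired, but only by importing this same identity: since the supremum over the support dominates the mean, $\max_{\omega\in[-\sqrt{3},\sqrt{3}]^n}\|(I-\widetilde Q\widetilde Q^T)A\,\omega\|\ge\|(I-\widetilde Q\widetilde Q^T)A\|_F$, so steering each column to (a realization near) this maximum keeps the ``if'' branch alive whenever the current residual exceeds $\epsilon$, and when it cannot be kept alive the residual is already $\le\epsilon$. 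This drops the unreachable goal $\mathcal{R}(Q)=\mathcal{R}(A)$ and still produces a valid realization; note, however, that the repair rests on exactly the expectation identity that is the heart of the paper's argument.
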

\begin{proof}
For simplicity, we assume that $n=tb$. If $\min_{1\le l \le b}|(R_1)_{ll}|\le \epsilon$, then we have $\rank(A\Omega_1)=\min\{\rank(A), b\}\ge 1$ with probability one by Lemma {\rm\ref{t2}} since  $\rank(A)\ge 1$. This means that $l_1>1$. Since $Y_1=A\Omega_1=P_1R_1$, we have $Q=P_1(:,1:l_1-1)$ and $A\Omega_1(:,1:l_1-1)$ admits the QR decomposition:
\[
A\Omega_1(:,1:l_1-1)=Y_1(:,1:l_1-1)=P_1R_1(:,1:l_1-1)=QR_1(1:l_1-1,1:l_1-1).
\]
Using Lemma {\rm\ref{t2}}, we have $l_1-1=\rank(R_1(1:l_1-1,1:l_1-1))=\rank(Q^TA\Omega_1(:,1:l_1-1)=\min\{\rank(Q^TA),l_1-1\}$ with probability one and thus $Q^TA$ is of full row rank  with probability one. It follows from the classical Gram-Schmidt algorithm for $A\Omega_1$ that
$(R_1)_{l_1l_1}=\|A(\Omega_1(:,l_1)-QQ^TA\Omega_1(:,l_1)\|_2$.
By Proposition \ref{sgt1} we have
$\|(I-QQ^T)A\|_F^{2}=\mathbb{E}(\|(I-QQ^T)A(\Omega_1(:,l_1)\|_2^{2})=\E((R_1)_{l_1l_1}^2) \le \epsilon^2$.

In the following, we assume that  $\min_{1\le l \le b}|(R_i)_{ll}|\le \epsilon$ for some $i>1$.
By hypothesis, $\min_{1\le l \le b}|(R_1)_{ll}|>\epsilon$. Then  we have  $Y_1=A\Omega_1=P_1R_1$. It follows  that $A\Omega_1$ admits the QR decomposition  $A\Omega_1=P_1R_1$, where $P_1$ has orthonormal columns.

Analogously, by hypothesis,  for any $1\le j\le i-1$, $\min_{1\le l \le b}|(R_j)_{ll}|>\epsilon$.
Then  $Y_{j}=(I_m-Q_{j-1}Q_{j-1}^T)A\Omega_{j} =P_{j}R_{j}$, where $Q_{j-1}=[Q_{j-2},P_{j-1}]=[P_1,\ldots,P_{j-1}]$. It follows  that
$A\Omega_j=P_1P_1^TA\Omega_j+\cdots+P_{j-1}P_{j-1}^TA\Omega_j+P_jR_j$ and $P_{j-1}^TP_j=0$.
This indicates that $A[\Omega_1, \Omega_2,\ldots, \Omega_j]$ admits the QR decomposition:
\[
A[\Omega_1,  \Omega_2,\ldots, \Omega_j]=[P_1,P_2,\ldots,P_j]\left[\begin{array}{cccc}
R_1 &R_{12} & \cdots & R_{1j}\\
       & R_2     & \cdots & R_{2j}\\
       &             & \ddots  & \vdots \\
       &             &   & R_j\\
\end{array}
\right],
\]
where $R_{ab}=P_a^TA\Omega_b$ for all $1\le a<b\le j$.

By hypothesis, $\min_{1\le l \le b}|(R_i)_{ll}|\le\epsilon$.  If $l_i=1$, then $Q=Q_{i-1}$. From the above analysis, we know that $A[\Omega_1, \Omega_2,\ldots, \Omega_{i-1}]$ admits the QR decomposition:
\[
A[\Omega_1,  \Omega_2,\ldots, \Omega_{i-1}]=Q\left[\begin{array}{cccc}
R_1 &R_{12} & \cdots & R_{1,i-1}\\
       & R_2     & \cdots & R_{2,i-1}\\
       &             & \ddots  & \vdots \\
       &             &   & R_{i-1}\\
\end{array}
\right].
\]
Using Lemma {\rm\ref{t2}}, we have $(i-1)b=\rank(Q^TA[\Omega_1, \Omega_2,\ldots, \Omega_{i-1}])=\min\{\rank(Q^TA),$ $(i-1)b\}$ with probability one and thus $Q^TA$ is of full row rank  with probability one.
We note that $Y_{i}=(I_m-Q_{i-1}Q_{i-1}^T)A\Omega_{i} =P_{i}R_{i}$.
It follows from the classical Gram-Schmidt algorithm for  $Y_{i}$ that
$(R_i)_{11}=\|A\Omega_i(:,1)-QQ^TA\Omega_i(:,1)\|_2$.
By Proposition  \ref{sgt1} we have
$\|(I-QQ^T)A\|_F^{2}=\mathbb{E} (\|(I-QQ^T)A(\Omega_i(:,1)\|_2^{2})=\E ((R_i)_{11}^2) \le \epsilon^2$.

If $l_i>1$, then we have $Y_i(:,1:l_i-1)=(I_m-Q_{i-1}Q_{i-1}^T)A\Omega_{i}(:,1:l_i-1) =P_{i}R_{i}(:,1:l_i-1)=P_{i}(:,1:l_i-1)R_{i}(1:l_i-1,1:l_i-1)$ and $P_{i-1}^TP_i(:,1:l_i-1)=0$. This indicates that $Q$ has orthonormal columns. Thus,  $A[\Omega_1, \Omega_2,\ldots, \Omega_{i-1}, \Omega_{i}(:,1:l_i-1)]$ admits the QR decomposition
\[
\begin{array}{ll}
&A[\Omega_1,  \Omega_2,\ldots, \Omega_{i-1},\Omega_{i}(:,1:l_i-1)]\\
& \quad= Q\left[\begin{array}{ccccc}
R_1 &R_{12} & \cdots & R_{1,i-1} &P_1^TA\Omega_{i}(:,1:l_i-1)  \\
       & R_2     & \cdots & R_{2,i-1} & P_2^TA\Omega_{i}(:,1:l_i-1)  \\
       &             & \ddots  & \vdots & \vdots   \\
       &             &   & R_{i-1} & \\
       &             &   & &  R_{i}(1:l_i-1,1:l_i-1)\\
\end{array}
\right].
\end{array}
\]
Using Lemma {\rm\ref{t2}}, we have $(i-1)b+l_i-1=\rank(Q^TA[\Omega_1, \Omega_2,\ldots, \Omega_{i-1},\Omega_{i}(:,1:l_i-1))]=\min\{\rank(Q^TA),$ $(i-1)b+l_i-1\}$ with probability one and thus $Q^TA$ is of full row rank  with probability one.
From the  classical Gram-Schmidt algorithm for  $Y_{i}$ we have
$(R_i)_{l_il_i}=\|A\Omega_i(:,l_i)-QQ^TA\Omega_i(:,l_i)\|_2$.
By Proposition  \ref{sgt1} we have
$\|(I-QQ^T)A\|_F^{2}=\mathbb{E} (\|(I-QQ^T)A(\Omega_i(:,l_i)\|_2^{2}) =\E ((R_i)_{l_il_i}^2) \le \epsilon^2$.
\end{proof}

\section{A GSVD algorithm of a full-rank GMP}\label{sec4}
In this section, we provide a new GSVD algorithm for a Grassman matrix pair (GMP) $\{A,L\}$, where  $A$ is assumed to be of full rank.  We first recall the definition of GMP.
\begin{definition} \cite{sun1983}
Let $A \in \R^{m\times n}$ and $L \in \R^{p\times n}$. A matrix pair $\{A,L\}$ is an $(m,p,n)$-Grassman matrix pair (GMP) if $\rank([A^T,L^T]^T)=n$.
 \end{definition}

We also recall the  definition of the GSVD of a GMP.
\begin{definition}\cite{sun1983,CS}\label{def-gsvd}
For a given $(m,p,n)$-GMP $\{A,L\}$, there exist an orthogonal matrix $U\in\Rmm$, an orthogonal matrix $V\in\Rnn$ and a nonsingular matrix $X\in\Rnn$ such that
\begin{eqnarray} \label{def}
& U^TAX=\Sigma_A\quad\mbox{and}\quad  V^TLX=\Sigma_L, & \\
& \Sigma_A=
\begin{blockarray}{ccc}
n-r-s & r+s  & \\
\begin{block}{[cc]c}
0 &  & m-r-s \\
& D_1  & r+s\\
\end{block}
\end{blockarray},
\quad \Sigma_L=
\begin{blockarray}{ccc}
n-r & r  & \\
\begin{block}{[cc]c}
D_2 & & n-r\\
& 0  & p+r-n \\
\end{block}
\end{blockarray},& \nonumber 
\end{eqnarray}
where  $D_1=\diag(\alpha_{n-r-s+1},\ldots,\alpha_{n})$ and $D_2=\diag(\beta_{1},\ldots,\beta_{n-r})$  with
\begin{align*}
0=\alpha_1=\cdots=\alpha_{n-r-s}<\alpha_{n-r-s+1}\le \cdots\le \alpha_{n-r}<\alpha_{n-r+1}=\cdots=\alpha_n=1, \\
1=\beta_1=\cdots=\beta_{n-r-s}>\beta_{n-r-s+1}\ge \cdots\ge  \beta_{n-r}>\beta_{n-r+1}=\cdots=\beta_n= 0,
\end{align*}
and $\alpha_i^2+\beta_i^2=1$ for $i=1,\ldots,n$.
\end{definition}

In the following, we give a new algorithm for computing the GSVD of an $(m,p,n)$-GMP $\{A,L\}$, where  $A$ has full rank. Let $[A^T,L^T]^T$ admit the reduced QR decomposition:
\[\left[
\begin{array}{c}
A \\
L 
\end{array}
\right] =\widetilde{Q}\widetilde{R}\equiv \left[
\begin{array}{c}
\widetilde{Q}_1 \\
\widetilde{Q}_2 
\end{array}
\right]\widetilde{R},\quad \widetilde{Q}_1 \in \R^{m\times n},
\]
where $\widetilde{Q}\in\R^{(m+p)\times n}$ has orthonormal columns and $\widetilde{R} \in \Rnn$ is a nonsingular upper triangular matrix. Then we have $A=\widetilde{Q}_1\widetilde{R}$ and $L=\widetilde{Q}_2\widetilde{R}$ and $\rank(A)=\rank(\widetilde{Q}_1)$ and
$\rank(L)=\rank(\widetilde{Q}_2)$.
If $\rank(A)=n\le m$, then $\widetilde{Q}_1^T\widetilde{Q}_1$ is symmetric and positive definite. Let $Q_1^TQ_1$ admit the symmetric Schur decomposition  $\widetilde{Q}_1^T\widetilde{Q}_1 = S\Psi S^T$, where $S\in\Rnn$ is orthogonal  and $\Psi=\diag(\psi_1,\ldots,\psi_n)\in\Rnn$ is a diagonal matrix with $0<\psi_1\le \cdots\le \psi_{n-r}<\psi_{n-r+1}= \cdots=\psi_n=1$. Let $D_1=\diag(\sqrt{\psi_1},\ldots, \sqrt{\psi_n})$ and $D_2=\diag(\sqrt{1-\psi_{1}},\ldots, \sqrt{1-\psi_{n-r}})$. We note that  $\widetilde{Q}_1^T\widetilde{Q}_1+\widetilde{Q}_{2}^T\widetilde{Q}_{2}=I_n$. Then
\[
\widetilde{Q}_{2}^T\widetilde{Q}_{2}=I_n-\widetilde{Q}_{1}^T\widetilde{Q}_{1}=I_n-SD_1^2S^T = S_1D_2^2S_1^T,\quad S_1=S(:,1:n-r).
\]
Let $U_2= \widetilde{Q}_{1}SD_1^{-1}$, $V_1 = \widetilde{Q}_{2}S_1 D_2^{-1}$ and $X = \widetilde{R}^{-1}S$ with $X_1=\widetilde{R}^{-1}S_1$. Then both $U_2$ and $V_1$ have orthonormal columns. Hence, the GSVD of the $(m,p,n)$-GMP $\{A,L\}$ is given by
\begin{align*}
&U_2^TA X =D_1^{-1}S^T\widetilde{Q}_{1}^T\widetilde{Q}_1\widetilde{R}\widetilde{R}^{-1}S = D_1^{-1}S^TSD_1^2S^TS=D_1,\\
&V_1^TLX_1 = D_2^{-1}S_1^T\widetilde{Q}_2^T\widetilde{Q}_2\widetilde{R}\widetilde{R}^{-1}S_1 = D_2^{-1}S_1^TS_1D_2^2S_1^TS_1=D_2.
\end{align*}

If $\rank(A)=m\le n$, then $\widetilde{Q}_1^T\widetilde{Q}_1$ is symmetric and positive semidefinite. Let $\widetilde{Q}_1^T\widetilde{Q}_1$ admit the symmetric Schur decomposition  $\widetilde{Q}_1^T\widetilde{Q}_1 = S\Psi S^T$, where $S\in\Rnn$ is orthogonal  and $\Psi=\diag(\psi_1,\ldots,\psi_n)\in\Rnn$ is a diagonal matrix with $0=\psi_1=\cdots=\psi_{n-m}<\psi_{n-m+1}\le \cdots\le\psi_{n-r}<\psi_{n-r+1}= \cdots=\psi_n=1$. Let $D_1=\diag(\sqrt{\psi_{n-m+1}},\ldots, \sqrt{\psi_n})$ and $D_2=\diag(\sqrt{1-\psi_{1}},\ldots, \sqrt{1-\psi_{n-r}})$. We note that  $\widetilde{Q}_{1}^T\widetilde{Q}_{1}+\widetilde{Q}_{2}^T\widetilde{Q}_{2}=I_n$. Then
\[
\widetilde{Q}_{2}^T\widetilde{Q}_{2}=I_n-\widetilde{Q}_{1}^T\widetilde{Q}_{1}=I_n-S\Psi S^T = S_1D_2^2S_1^T,\quad S_1=S(:,1:n-r).
\]
Let $U = \widetilde{Q}_{1}S_2D_1^{-1}$ with $S_2=S(:,n-m+1:n)$, $V_1 = \widetilde{Q}_{2}S_1 D_2^{-1}$ and $X = \widetilde{R}^{-1}S$ with $X_2=\widetilde{R}^{-1}S_2$ and $X_1=\widetilde{R}^{-1}S_1$. Then $U$ is orthogonal  and $V_1$ has orthonormal columns. Hence, the GSVD of the $(m,p,n)$-GMP $\{A,L\}$ is given by
\begin{align*}
&U^TA X_2 = D_1^{-1}S_2^T\widetilde{Q}_{1}^T\widetilde{Q}_1\widetilde{R}\widetilde{R}^{-1}S_2 =D_1^{-1}S_2^TS_2D_1^2S_2^TS_2=D_1,\\
&V_1^TLX_1 = D_2^{-1}S_1^T\widetilde{Q}_2^T\widetilde{Q}_2\widetilde{R}\widetilde{R}^{-1}S_1 = D_2^{-1}S_1^TS_1D_2^2S_1^TS_1=D_2.
\end{align*}

Based on the above analysis, an algorithm can be obtained for computing the GSVD of  a full-rank GMP, which is described as Algorithm \ref{alg:alg2}.

\begin{algorithm}[h]
\caption{Fast GSVD of a full-rank GMP} \label{alg:alg2}
{\bf Input:} A full-rank $(m,p,n)$-GMP $\{A,L\}$ with $A$ having full rank.\\
{\bf Output:} For $\rank(A)=n$: The matrices $U_2\in\Rmn$ and $V_1\in\R^{p\times (n-r)}$ with orthonormal columns, the nonsingular matrix $X\in\Rnn$,  and the diagonal matrices $D_1$ and $D_2$ with positive diagonal entries such that $U_2^TAX=D_1$ and $V_1^TLX_1=D_2$,  where $D_1$ and $D_2$ are nonsingular diagonal matrices defined as in \eqref{def} with $s=n-r$,  $X_1=X(:,1:n-r)$;  For $\rank(A)=m$: The matrices $U\in\Rmm$ and $V_1\in\R^{p\times (n-r)}$ with orthonormal columns, the nonsingular matrix $X\in\Rnn$,  and the diagonal matrices $D_1$ and $D_2$ such that $U^TAX_2=D_1$ and $V_1^TLX_1=D_2$, where $D_1$ and $D_2$ are nonsingular diagonal matrices defined as in \eqref{def} with $s=m-r$,  $X_1=X(:,1:n-r)$, and $X_2=X(:,n-m+1:n)$.
\begin{algorithmic}[1]
\STATE Compute the reduced QR decomposition of $[A^T,L^T]^T=\widetilde{Q}\widetilde{R}$, where $\widetilde{Q}=[\widetilde{Q}_1^T,\widetilde{Q}_2^T]^T\in\R^{(m+p)\times n}$ has orthonormal columns with $\widetilde{Q}_1 \in \Rmn$ and $\widetilde{R} \in \Rnn$  is upper triangular.\\
\IF{$n \leq m$}
    \STATE Compute the symmetric Schur decomposition $\widetilde{Q}_1^T\widetilde{Q}_1=S\Psi S^T$, where $S\in\Rnn$ is orthogonal  and $\Psi=\diag(\psi_1,\ldots,\psi_n)\in\Rnn$ with $0<\psi_1\le \cdots\le \psi_{n-r}<\psi_{n-r+1}= \cdots=\psi_n=1$.
    \STATE Set $D_1=\diag(\sqrt{\psi_1},\ldots, \sqrt{\psi_n})$ and
    $D_2=\diag(\sqrt{1-\psi_{1}},\ldots, \sqrt{1-\psi_{n-r}})$.
     \STATE Set $U_2 = \widetilde{Q}_{1}SD_1^{-1}$, $V_1 = \widetilde{Q}_{2}S_1 D_2^{-1}$ with $S_1=S(:,1:n-r)$, and $X = \widetilde{R}^{-1}S$ with $X_1=\widetilde{R}^{-1}S_1$.
    \ELSE
     \STATE Compute the symmetric Schur decomposition $\widetilde{Q}_1^T\widetilde{Q}_1=S\Psi S^T$, where $S\in\Rnn$ is orthogonal  and $\Psi=\diag(\psi_1,\ldots,\psi_n)\in\Rnn$ with $0=\psi_1=\cdots=\psi_{n-m}<\psi_{n-m+1}\le \cdots\le\psi_{n-r}<\psi_{n-r+1}= \cdots=\psi_n=1$.
    \STATE Set $D_1=\diag(\sqrt{\psi_{n-m+1}},\ldots, \sqrt{\psi_n})$ and $D_2=\diag(\sqrt{1-\psi_{1}},\ldots, \sqrt{1-\psi_{n-r}})$.
    \STATE Set $U = Q_{1}S_2D_1^{-1}$ with $S_2=S(:,n-m+1:n)$, $V_1 = Q_{2}S_1 D_2^{-1}$ with  $S_1=S(:,1:n-r)$, and $X = \widetilde{R}^{-1}S$ with $X_2=\widetilde{R}^{-1}S_2$ and $X_1=\widetilde{R}^{-1}S_1$.
\ENDIF
\end{algorithmic}
\end{algorithm}

\section{Tikhonov regularization via  two-sided uniformly randomized GSVD}\label{sec5}
In this section, we propose a   two-sided uniformly randomized GSVD algorithm for solving the large-scale  Tikhonov regularization problem \eqref{tikhonov}. Suppose that $\mathcal{N}(A) \cap \mathcal{N}(L)=\{\bf 0\}$, i.e., $\rank([A^T, L^T]^T)=n$. Then problem \eqref{tikhonov} has a unique  regularized solution \eqref{tsol}.  This shows that $\{A,L\}$ is a GMP.
By Definition \ref{def-gsvd},   the GMP $\{A,L\}$ admits the GSVD defined as \eqref{def}.
Then problem \eqref{tikhonov} is reduced to \begin{align}
\underset{\by\in \mathbb{R}^n}{\min}{ \|\Sigma_A \by-U^T\bb\|^2+\lambda^2\|\Sigma_L\by\|^2}, \quad \by=X^{-1}\bx. \label{exact}
\end{align}
Let $U=[\bu_1,\ldots,\bu_m]$. The unique regularized solution $\by_\lambda\in\Rn$ of problem \eqref{exact} is given by $\by_{\lambda}=([\Sigma_A^T, \Sigma_L^T]^T)^\dag[\bb^T,{\bf 0}^T]^T$, i.e.,
\[
\by_{\lambda}=\Big(0,\ldots, 0,\frac{\alpha_{n-r-s+1}\eta_{n-r-s+1}}{\alpha_{n-r-s+1}^2+\lambda^2\beta_{n-r-s+1}^2}, \ldots, \frac{\alpha_{n-r}\eta_{n-r}}{\alpha_{n-r}^2+\lambda^2\beta_{n-r}^2},\eta_{n-r+1},\ldots,\eta_n\Big)^T,
\]
where $\eta_i=\bu_{m-n+i}^T\bb$ for all $1\le i\le n$.
Let $X=[\bx_1,\ldots,\bx_n]$. Then the unique regularized solution of problem \eqref{tikhonov} is given by
\[
\bx_\lambda=\sum_{i=n-r-s+1}^{n-r}\frac{\alpha_i} {\alpha_i^2+\lambda^2\beta_i^2}\eta_i\bx_i+ \sum_{i=n-r+1}^{n} \eta_i\bx_i.
\]

When problem  \eqref{tikhonov}  is large-scale, calculating the GSVD of the GMP $\{A,L\}$ requires a large amount of computation and storage.  Then, the  GSVD and truncated GSVD are not suitable for solving large-scale problem  \eqref{tikhonov}. 

Recently, some randomized methods were proposed for solving the large-scale problem  \eqref{tikhonov} \cite{random, jia2018, 2016, XZ13,xiang2015}.   
These randomized algorithms only adopt the one-sided sampling based on the Gaussian random matrix while  only the number of rows or columns is reduced and the reduced problem may be still large-scale and ill-posed. 

In the following,  we propose a two-sided randomized GSVD algorithm based on the uniform random test matrix for the large-scale problem  \eqref{tikhonov} with both overdetermined and underdetermined  cases.  
\subsection{Overdetermined case ($m\ge n$)}
For a prescribed tolerance $\epsilon>0$, by Algorithm \ref{alg:alg1}, there exists a uniform random test matrix $\Omega_P\in\R^{n\times\ell_1}$ such that $A\Omega_P$ admits the QR decomposition $A\Omega_P=PR$  with  $P\in\R^{m\times\ell_1}$ having orthonormal columns, $\|A- PP^TA\|_F\le\epsilon$  and  $P^TA$ has full row rank  with probability one, where $\ell_1\le n\le m$. Then $P^TA=(A\Omega_P R^{-1})^TA$. By hypothesis, $\mathcal{N}(A) \cap \mathcal{N}(L)=\{\bf 0\}$.  As in \cite{2016}, it is easy to check that $\mathcal{N}(P^TA) \cap \mathcal{N}(L)=\{\bf 0\}$. Then, when $n\ll m$, one may use the GSVD of the small-scale GMP $\{P^TA,L\}$ to compute an approximate GSVD of the GMP $\{A,L\}$. 

For some applications, both $m$ and $n$ may be very large and thus the GSVD of $\{P^TA,L\}$ is still expensive. Then we can further use Algorithm \ref{alg:alg1} to $A^TP$ to find an approximate basis matrix $Q \in \mathbb{R}^{n\times \ell_2}$ with  orthonormal columns  for the range of $A^TP$ such that $\|P^TA- P^TAQQ^T\|_F=\|A^TP- QQ^TA^TP\|_F\le\epsilon$ and $Q^T(A^TP)$ has full row rank  with probability one, where $\ell_2\le\min\{m,\ell_1\}=\ell_1$.  Since $\mathcal{N}(P^TA) \cap \mathcal{N}(L)=\{\bf 0\}$, we have $\mathcal{N}(P^TAQ) \cap \mathcal{N}(LQ)=\{\bf 0\}$, i.e., $\rank((P^TAQ)^T,(LQ)^T]^T)=\ell_2$.  Suppose that $\ell_2\le\ell_1\ll n$. Then, we can use the GSVD of the  GMP $\{P^TAQ,LQ\}$ to compute an approximate GSVD of the GMP $\{A,L\}$.  We note that $P^TAQ$ has full column rank  with probability one. Then,  using Algorithm \ref{alg:alg2}, $[(P^TAQ)^T,(LQ)^T]^T$ admits the  following GSVD:
\[
\left[
\begin{array}{c}
P^TAQ\\
LQ 
\end{array}
\right] =\left[
\begin{array}{cc}
\widetilde{U}_2 & \\
& V_1
\end{array}
\right] \left[
\begin{array}{c}
D_1 W\\
D_2 W(1:\ell_2-r,:)
\end{array}
\right],
\]
where $\widetilde{U}_2\in\R^{\ell_1\times \ell_2}$ and $V_1\in\R^{p\times (\ell_2-r)}$ have orthonormal columns, $D_1$ and $D_2$ are diagonal matrices with positive diagonal entries, and $W=\widetilde{X}^{-1}\in\R^{\ell_2\times\ell_2}$ is nonsingular. Hence, we obtain the following approximate GSVD of $[A^T,L^T]^T$:
\BE\label{appr:alo}
\left[
\begin{array}{c}
A\\
L 
\end{array}
\right]\approx
\left[
\begin{array}{c}
PP^TAQQ^T\\
LQQ^T \\
\end{array}
\right]=\left[
\begin{array}{cc}
U_2 & \\
& V_1
\end{array}
\right] \left[
\begin{array}{c}
D_1Z \\
D_2 Z(1:\ell_2-r,:)
\end{array}
\right],
\EE
where $U_2=P\widetilde{U}_2\in\R^{m\times \ell_2}$ has orthonormal columns and $Z=WQ^T=\widetilde{X}^{-1}Q^T\in\R^{\ell_2\times n}$ has full row rank.
The corresponding two-sided uniform randomized GSVD algorithm for the case of $m\ge n$ can be described as Algorithm  \ref{alg:alg4}.
\begin{algorithm}[ht]
\caption{Two-sided uniform randomized GSVD ($m\geq n$)}\label{alg:alg4}
{\bf Input:} $A\in \mathbb{R}^{m\times n}$, $L\in \mathbb{R}^{p\times n}$, and a tolerance $\epsilon>0$. \\
{\bf Output:} The matrices $U_2\in\R^{m\times \ell_2}$ and $V_1\in\R^{p\times (\ell_2-r)}$ with orthonormal columns, the diagonal matrices $D_1\in\R^{\ell_2\times\ell_2}$ and $D_2\in\R^{(\ell_2-r)\times (\ell_2-r)}$, and the full row rank matrix $Z\in\R^{\ell_2\times n}$.
\begin{algorithmic}[1]
\STATE Compute an  approximate basis matrix $P\in\R^{m\times\ell_1}$ with orthonormal columns for the range of $A$ such that  $\|PP^{T}A-A\|\leq\epsilon$ via Algorithm \ref{alg:alg1} with a blocksize $1\le b<n$ and  the uniform random test matrix $\Omega_P\in\R^{n\times \ell_1}$. \\
\STATE Compute an  approximate basis matrix $Q\in\R^{n\times\ell_2}$ with orthonormal columns for the range of $A^TP$ such that $\|A^TP-QQ^{T}A^TP\|\leq\epsilon$ via Algorithm \ref{alg:alg1} with a blocksize $1\le b<\ell_1$ and the uniform random test matrix  $\Omega_Q\in\R^{\ell_1\times \ell_2}$.
\STATE Compute the GSVD of the GMP $\{P^{T}AQ,LQ\}$ via Algorithm \ref{alg:alg2}: 
$
\left[
\begin{array}{c}
P^TAQ\\
LQ 
\end{array}
\right] =\left[
\begin{array}{cc}
\widetilde{U}_2 & \\
& V_1
\end{array}
\right] \left[
\begin{array}{c}
D_1 W\\
D_2 W(1:\ell_2-r,:)
\end{array}
\right]
$, where $W=\widetilde{X}^{-1}\in\R^{\ell_2\times\ell_2}$.\\
\STATE Form the $m\times \ell_2$ matrix $U_2=P\widetilde{U}_2$ and  the $\ell_2\times n$ matrix $Z=\widetilde{X}^{-1}Q^T$. \\
\end{algorithmic}
\end{algorithm}

Therefore, the regularized solution of problem  \eqref{tikhonov} can be approximated by
\begin{flalign*} \hspace{15pt}
\begin{split}
\bx_{\lambda}&= \left[
\begin{array}{c}
A\\
\lambda L 
\end{array}
\right]^\dag
\left[
\begin{array}{c}
\bb\\
\bf 0
\end{array}
\right] \approx
\left[
\begin{array}{c}
PP^TAQQ^T\\
\lambda LQQ^T \\
\end{array}
\right]^\dag \left[
\begin{array}{c}
\bb\\
\bf 0
\end{array}
\right] =
Q\left[
\begin{array}{c}
PP^TAQ\\
\lambda LQ \\
\end{array}
\right]^\dag
\left[
\begin{array}{c}
\bb\\
\bf 0
\end{array}
\right] \\
&=
Q\left( \left[
\begin{array}{c}
PP^TAQ\\
\lambda LQ \\
\end{array}
\right]^T
\left[
\begin{array}{c}
PP^TAQ\\
\lambda LQ \\
\end{array}
\right]\right)^{-1}
 \left[
\begin{array}{c}
PP^TAQ\\
\lambda LQ \\
\end{array}
\right]^T
\left[
\begin{array}{c}
\bb\\
\bf 0
\end{array}
\right] 
\end{split}&
\end{flalign*}
\begin{flalign*}  \hspace{15pt}
\begin{split}
&=
Q\left( \left[
\begin{array}{c}
P^TAQ\\
\lambda LQ \\
\end{array}
\right]^T
\left[
\begin{array}{c}
P^TAQ\\
\lambda LQ \\
\end{array}
\right]\right)^{-1}
\left[
\begin{array}{c}
P^TAQ\\
\lambda LQ \\
\end{array}
\right]^T
\left[
\begin{array}{c}
P^T\bb\\
\bf 0
\end{array}
\right] \\
&=
Q\left[
\begin{array}{c}
P^TAQ\\
\lambda LQ \\
\end{array}
\right]^\dag
\left[
\begin{array}{c}
P^T\bb\\
\bf 0
\end{array}
\right].
\end{split}&
\end{flalign*}
This shows that  $\bx_{\lambda}$ can be approximated by the  randomized GSVD in Algorithm  \ref{alg:alg4} for the case of $m\ge n$.
\subsection{Underdetermined case ($m< n$)}
For a prescribed tolerance $\epsilon>0$, by Algorithm \ref{alg:alg1} with a series of uniform random test matrices, we first find an approximate basis matrix $Q \in \mathbb{R}^{n\times \ell_1}$ with  orthonormal columns  for the range of $A^T$ such that $\|A- AQQ^T\|_F=\|A^T- QQ^TA^T\|_F\le\epsilon$ and  $Q^TA^T$ has full row rank  with probability one, where $\ell_1\le m< n$. By hypothesis, $\mathcal{N}(A) \cap \mathcal{N}(L)=\{\bf 0\}$. Thus, $\mathcal{N}(AQ) \cap \mathcal{N}(LQ)=\{\bf 0\}$. Then, one may use the GSVD of the GMP $\{AQ,LQ\}$ to compute an approximate GSVD of the GMP $\{A,L\}$. 

Since we aim to solve the large-scale Tikhonov regularization problem  \eqref{tikhonov}, it may be still very expensive to compute the GSVD of the GMP $\{AQ,LQ\}$. Then we can further employ  Algorithm \ref{alg:alg1} to $AQ$ to find an approximate basis matrix $P \in \mathbb{R}^{m\times \ell_2}$ with  orthonormal columns  for the range of $AQ$ such that $\|AQ- PP^TAQ\|_F\le\epsilon$ and $P^TAQ$ has full row rank  with probability one, where $\ell_2\le\min\{m,\ell_1\}=\ell_1$.
As in \cite{2016}, it is easy to check that $\mathcal{N}(P^TAQ) \cap \mathcal{N}(LQ)=\{\bf 0\}$ since $\mathcal{N}(AQ) \cap \mathcal{N}(LQ)=\{\bf 0\}$. Then, we can use the GSVD of the   GMP $\{P^TAQ,LQ\}$ to compute an approximate GSVD of the GMP $\{A,L\}$.  We note that $P^TAQ$ has full row rank  with probability one.  Then,  using Algorithm \ref{alg:alg2},  $[(P^TAQ)^T,(LQ)^T]^T$ admits the  following GSVD:
\[
\left[
\begin{array}{c}
P^TAQ\\
LQ 
\end{array}
\right] =\left[
\begin{array}{cc}
\widetilde{U} & \\
& V_1
\end{array}
\right] \left[
\begin{array}{c}
D_1 W(\ell_1-\ell_2+1:\ell_1,:) \\
D_2 W(1:\ell_1-r,:)
\end{array}
\right],
\]
where $\widetilde{U}\in\R^{\ell_2\times \ell_2}$ and $V_1\in\R^{p\times (\ell_1-r)}$ have orthonormal columns, $D_1$ and $D_2$ are diagonal matrices with positive diagonal entries, and $W=\widetilde{X}^{-1}\in\R^{\ell_1\times\ell_1}$ is nonsingular. Hence, we obtain the following approximate GSVD of $[A^T,L^T]^T$:
\BE\label{appr:alu}
\left[
\begin{array}{c}
A\\
L 
\end{array}
\right]\approx
\left[
\begin{array}{c}
PP^TAQQ^T\\
LQQ^T \\
\end{array}
\right]=\left[
\begin{array}{cc}
U_2 & \\
& V_1
\end{array}
\right] \left[
\begin{array}{c}
D_1Z(\ell_1-\ell_2+1:\ell_1,:)  \\
D_2 Z(1:\ell_1-r,:)
\end{array}
\right],
\EE
where $U_2=P\widetilde{U}\in\R^{m\times \ell_2}$ has orthonormal columns and $Z=\widetilde{X}^{-1}Q^T\in\R^{\ell_1\times n}$ has full row rank.
The corresponding two-sided uniform randomized GSVD algorithm for the case of $m< n$ can be described as Algorithm  \ref{alg:alg5}.
\begin{algorithm}[h]
\caption{Two-sided uniform randomized GSVD ($m< n$)}\label{alg:alg5}
{\bf Input:} $A\in \mathbb{R}^{m\times n}$, $L\in \mathbb{R}^{p\times n}$, and a tolerance $\epsilon>0$. \\
{\bf Output:} The matrices $U_2\in\R^{m\times \ell_2}$ and $V_1\in\R^{p\times (\ell_1-r)}$ with orthonormal columns, the diagonal matrices $D_1\in\R^{\ell_2\times\ell_2}$ and $D_2\in\R^{(\ell_1-r)\times (\ell_1-r)}$, and the full row rank matrix $Z\in\R^{\ell_1\times n}$.
\begin{algorithmic}[1]
\STATE Compute an  approximate basis matrix $Q\in\R^{n\times\ell_1}$ with orthonormal columns for the range of $A^T$ such that $\|A^T-QQ^TA^T\|\leq\epsilon$ via Algorithm \ref{alg:alg1} with a blocksize $1\le b<m$ and  the uniform random test matrix $\Omega_Q\in\R^{m\times\ell_1}$.  \\
\STATE Compute an  approximate basis matrix $P\in\R^{m\times\ell_2}$ with orthonormal columns for the range of $AQ$ such that $\|PP^{T}AQ-AQ\|\leq\epsilon$ via Algorithm \ref{alg:alg1} with a blocksize $1\le b<\ell_1$ and the uniform random test matrix $\Omega_P \in \R^{\ell_1\times\ell_2}$.
\STATE Compute the GSVD of the GMP $\{P^{T}AQ,LQ\}$ via Algorithm \ref{alg:alg2}: 
$
\left[
\begin{array}{c}
P^TAQ\\
LQ 
\end{array}
\right] =\left[
\begin{array}{cc}
\widetilde{U} & \\
& V_1
\end{array}
\right] \left[
\begin{array}{c}
D_1 W(\ell_1-\ell_2+1:\ell_1,:)\\
D_2 W(1:\ell_1-r,:)
\end{array}
\right]
$, where $W=\widetilde{X}^{-1}\in\R^{\ell_1\times\ell_1}$.\\
\STATE Form the $m\times \ell_2$ matrix $U_2=P\widetilde{U}_2$ and  the $\ell_1\times n$ matrix $Z=\widetilde{X}^{-1}Q^T$. \\
\end{algorithmic}
\end{algorithm}

In this case, the regularized solution of problem  \eqref{tikhonov} can be approximated by
\begin{flalign*} \hspace{15pt}
\begin{split}
\bx_{\lambda}&= \left[
\begin{array}{c}
A\\
\lambda L 
\end{array}
\right]^\dag
\left[
\begin{array}{c}
\bb\\
\bf 0
\end{array}
\right] \approx
\left[
\begin{array}{c}
PP^TAQQ^T\\
\lambda LQQ^T \\
\end{array}
\right]^\dag \left[
\begin{array}{c}
\bb\\
\bf 0
\end{array}
\right] =
Q\left[
\begin{array}{c}
PP^TAQ\\
LQ \\
\end{array}
\right]^\dag
\left[
\begin{array}{c}
\bb\\
\bf 0
\end{array}
\right] \notag\\
&=
Q\left( \left[
\begin{array}{c}
P^TAQ\\
LQ \\
\end{array}
\right]^T
\left[
\begin{array}{c}
P^TAQ\\
LQ \\
\end{array}
\right]\right)^{-1}
\left[
\begin{array}{c}
P^TAQ\\
LQ \\
\end{array}
\right]^T
\left[
\begin{array}{c}
P^T\bb\\
\bf 0
\end{array}
\right] 
\end{split}&
\end{flalign*}
\begin{flalign*}  \hspace{15pt}
\begin{split}
&=
Q\left[
\begin{array}{c}
P^TAQ\\
LQ \\
\end{array}
\right]^\dag
\left[
\begin{array}{c}
P^T\bb\\
\bf 0
\end{array}
\right].
\end{split}&
\end{flalign*}
This shows that  $\bx_{\lambda}$ can be approximated by the  randomized GSVD in Algorithm  \ref{alg:alg5} for the case of $m< n$.
\subsection{Choice of  the regularization parameter $\lambda$}
We can choose the regularization parameter $\lambda$ by combining the generalized cross-validation (GCV) method \cite{GCV} or L-curve rule \cite{H92} with the proposed two-sided uniformly randomized GSVD of  the GMP $\{A,L\}$. In our later numerical experiments,  the approximate GSVD  \eqref{appr:alo} or \eqref{appr:alu} is employed for selecting  the parameter $\lambda$ as in  \cite[Algorithm 4]{2016}.
\subsection{Error Analysis}
In this subsection, we give the error analysis for Algorithms \ref{alg:alg4}--\ref{alg:alg5}.
We first review some preliminary results on the perturbation results for Moore-Penrose  pseudo-inverses.

\begin{lemma}[\cite{s1977}, Theorem 3.3]\label{ab}
For any $G_1,G_2\in\Cmn$, one has
\[
\|G_1^{\dagger}-G_2^{\dagger}\|\leq(1+\sqrt{5})/2\max\{\|G_1^{\dagger}\|^{2},\|G_2^{\dagger}\|^{2}\}\|G_1-G_2\|.
\]
\end{lemma}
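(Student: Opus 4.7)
The plan is to establish this inequality by means of a Wedin--type identity that splits the pseudoinverse difference into three geometrically meaningful pieces. Writing $E := G_1 - G_2$, I would first verify the identity
\begin{equation*}
G_1^{\dagger} - G_2^{\dagger} = -G_1^{\dagger} E G_2^{\dagger} + G_1^{\dagger}(G_1^{\dagger})^{*} E^{*} (I - G_2 G_2^{\dagger}) + (I - G_1^{\dagger} G_1) E^{*} (G_2^{\dagger})^{*} G_2^{\dagger},
\end{equation*}
which can be checked by direct manipulation using the four Penrose axioms: multiply both sides on the left and right by the orthogonal projectors $G_1 G_1^{\dagger}$, $I - G_1 G_1^{\dagger}$, $G_2^{\dagger} G_2$ and $I - G_2^{\dagger} G_2$, and verify the equality block by block. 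The first term handles the ``principal'' perturbation while the two remaining terms correct for possible changes in the range and corange as one passes from $G_1$ to $G_2$.

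With the identity available, I would take the spectral norm on both sides and apply submultiplicativity together with the bounds $\|I - G_1^{\dagger} G_1\| \le 1$, $\|I - G_2 G_2^{\dagger}\| \le 1$ and $\|(G_i^{\dagger})^{*}\| = \|G_i^{\dagger}\|$. This yields the preliminary estimate
\begin{equation*}
\|G_1^{\dagger} - G_2^{\dagger}\| \le \bigl(\|G_1^{\dagger}\|\,\|G_2^{\dagger}\| + \|G_1^{\dagger}\|^{2} + \|G_2^{\dagger}\|^{2}\bigr)\,\|E\|.
\end{equation*}
Bounding the cross term crudely by $\|G_1^{\dagger}\|\|G_2^{\dagger}\| \le \max\{\|G_1^{\dagger}\|^{2}, \|G_2^{\dagger}\|^{2}\}$ immediately gives a constant of $3$, which is the classical Wedin-style bound and already suffices for most perturbation arguments downstream.

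The main obstacle is sharpening this constant from $3$ down to $(1+\sqrt{5})/2$. To get the golden ratio one cannot afford the crude bound on the mixed product; instead, I would set $t = (1+\sqrt{5})/2$ and exploit the defining relation $t^{2} = t+1$ by writing the cross term as a convex combination tailored so that the final coefficient collapses onto $t \cdot \max\{\|G_1^{\dagger}\|^{2}, \|G_2^{\dagger}\|^{2}\}$. In practice, this is a scalar optimization in $\rho := \|G_1^{\dagger}\|/\|G_2^{\dagger}\|$ whose worst case is driven by the quadratic equation of the golden section. Once this optimization is carried out, the stated bound follows. Since the result is Stewart's classical perturbation theorem, the cleanest path in the paper itself is to invoke it directly as cited, which is what I would do.
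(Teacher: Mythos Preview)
The paper does not prove this lemma; it simply cites Stewart's Theorem~3.3, which is exactly what your closing sentence recommends. In that narrow sense your proposal and the paper agree.

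The sketched argument, however, has a genuine gap at the sharpening step. The Wedin identity and the constant-$3$ bound are correct, and you rightly observe that the crude estimate must be improved. But the fix you propose---rewriting the cross term as a convex combination and then optimizing in $\rho=\|G_{1}^{\dagger}\|/\|G_{2}^{\dagger}\|$---cannot work as stated: any estimate built from the individual bounds $\|T_{1}\|\le ab\|E\|$, $\|T_{2}\|\le a^{2}\|E\|$, $\|T_{3}\|\le b^{2}\|E\|$ together with the triangle inequality is at best $(ab+a^{2}+b^{2})\|E\|$, and at $a=b$ this equals $3a^{2}\|E\|$, strictly larger than $\tfrac{1+\sqrt5}{2}\,a^{2}\|E\|$. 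The relation $t^{2}=t+1$ plays no role at that level. What your outline omits is the \emph{orthogonality structure} of the three Wedin terms: with respect to the decompositions $\mathbb{C}^{m}=\mathcal{R}(G_{2})\oplus\mathcal{R}(G_{2})^{\perp}$ and $\mathbb{C}^{n}=\mathcal{R}(G_{1}^{*})\oplus\mathcal{R}(G_{1}^{*})^{\perp}$, the sum $T_{1}+T_{2}+T_{3}$ has the block form $\left(\begin{smallmatrix} T_{1} & T_{2} \\ T_{3} & 0 \end{smallmatrix}\right)$ with a zero corner. Bounding the spectral norm of this block operator by that of the scalar $2\times2$ matrix $\left(\begin{smallmatrix} ab & a^{2} \\ b^{2} & 0 \end{smallmatrix}\right)$ and computing the latter explicitly gives exactly $\tfrac{1+\sqrt5}{2}\max\{a^{2},b^{2}\}$, with the worst case at $a=b$. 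That block-orthogonality step is the missing idea; without it the constant cannot be pushed below $3$.
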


\begin{lemma}[\cite{s1977}, Theorem 3.4]\label{abl}
For $G_1,G_2\in \Cmn$ with $\rank(G_1) = \rank(G_2)$, we have 
$\|G_2^{\dagger}-G_1^{\dagger}\|\le(1+\sqrt{5})/2\|G_2^{\dagger}\|\|G_1^{\dagger}\|\|G_2-G_1\|$
if $\rank(G_1)<\min\{m,n\}$, and 
$\|G_2^{\dagger}-G_1^{\dagger}\|\leq\sqrt{2}\|G_2^{\dagger}\|\|G_1^{\dagger}\|\|G_2-G_1\|$
if $\rank(G_1) =\min\{m,n\}$.
\end{lemma}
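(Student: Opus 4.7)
The plan is to start from the standard Wedin decomposition of the pseudo-inverse difference, namely
\[
G_2^{\dagger}-G_1^{\dagger} = -G_2^{\dagger} E\, G_1^{\dagger} + G_2^{\dagger}(G_2^{\dagger})^{T} E^{T}(I - G_1 G_1^{\dagger}) + (I - G_2^{\dagger}G_2) E^{T} (G_1^{\dagger})^{T} G_1^{\dagger},
\]
where $E = G_2 - G_1$, and to bound each of the three summands separately. The first term immediately contributes $\|G_2^{\dagger}\|\,\|G_1^{\dagger}\|\,\|E\|$, which is already of the desired type. The task is to show that under the rank-preservation hypothesis the second and third terms can likewise be bounded by products $\|G_1^{\dagger}\|\,\|G_2^{\dagger}\|\,\|E\|$ rather than by $\max\{\|G_1^{\dagger}\|^2,\|G_2^{\dagger}\|^2\}\,\|E\|$ as in Lemma~\ref{ab}.

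The crux of the argument is a pair of identities that use $(I-G_1G_1^{\dagger})G_1 = 0$ and its adjoint $G_1(I - G_1^{\dagger}G_1) = 0$ to move the perturbation inside the projectors: one writes
\[
(I - G_1 G_1^{\dagger})G_2 = (I - G_1 G_1^{\dagger})E, \qquad G_2(I - G_1^{\dagger}G_1) = E(I - G_1^{\dagger}G_1),
\]
so that each ``orthogonal-complement'' factor is already of norm at most $\|E\|$. When $\rank(G_1)=\rank(G_2)$, a further step shows that the restrictions of $I - G_1 G_1^{\dagger}$ and $I - G_2^{\dagger}G_2$ to the appropriate ranges of $G_2$ and $G_2^{T}$ are themselves controlled by $\|E\|\,\|G_2^{\dagger}\|$ (in operator norm); concretely, using $G_2^{\dagger}(G_2^{\dagger})^{T} = (G_2^{T}G_2)^{\dagger}$ and the fact that $\mathcal{R}(G_2^{T})$ has the same dimension as $\mathcal{R}(G_1^{T})$, the middle term can be rewritten as $(G_2^{T}G_2)^{\dagger}\bigl((I-G_1G_1^{\dagger})E\bigr)^{T}$ and estimated by $\|G_2^{\dagger}\|\,\|G_1^{\dagger}\|\,\|E\|$; an analogous argument handles the third term. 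This yields a sum of three comparable bounds.

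At this point, a direct triangle inequality would produce a constant of $3$. To sharpen it to $(1+\sqrt{5})/2$ in the general rank-deficient case, I would use the self-improvement trick: add and subtract $G_2^{\dagger}E G_2^{\dagger}$ inside one of the projector terms, which makes $\|G_2^{\dagger}-G_1^{\dagger}\|$ appear on the right-hand side multiplied by $\|E\|\,\|G_1^{\dagger}\|$, and then solve the resulting quadratic inequality; the golden-ratio constant emerges as the positive root. For the full-rank case $\rank(G_1)=\min\{m,n\}$, one of the two projector terms is identically zero (either $I - G_2^{\dagger}G_2 = 0$ or $I - G_1G_1^{\dagger} = 0$), leaving only two terms, and an orthogonality-based $\ell^{2}$ bound on the remaining pair produces the $\sqrt{2}$ factor.

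The main obstacle is the sharp constant. The Wedin identity and the projector manipulations are essentially bookkeeping, but justifying $(1+\sqrt{5})/2$ rather than a cruder $3$ or $2+\sqrt{2}$ requires either the self-improvement/quadratic-inequality device described above or an appeal to a refined splitting into mutually orthogonal subspaces (in which case $\sqrt{2}$ in the full-rank case is a Pythagorean bound, not a triangle-inequality bound). I would proceed as described and treat the constant optimization as the delicate final step, closely following Stewart's original argument in \cite{s1977}.
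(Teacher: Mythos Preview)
The paper does not give a proof of this lemma at all; it merely cites Stewart~\cite{s1977}, Theorem~3.4, as a preliminary perturbation result and uses it as a black box in the error analysis of Algorithms~\ref{alg:alg4}--\ref{alg:alg5}. There is therefore no ``paper's own proof'' to compare your proposal against.

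Your sketch is essentially the classical Wedin--Stewart route and is the right framework. A few remarks on correctness. First, since the statement is for $G_1,G_2\in\Cmn$, the decomposition should use conjugate transposes $E^{*}$, not $E^{T}$. Second, the crucial step that turns a bound $\|G_2^{\dagger}\|^{2}\|E\|$ for the middle term into $\|G_1^{\dagger}\|\,\|G_2^{\dagger}\|\,\|E\|$ is not the rewriting $(G_2^{*}G_2)^{\dagger}\bigl((I-G_1G_1^{\dagger})E\bigr)^{*}$ alone; what you actually need is the equal-rank symmetry of canonical angles: because $\dim\mathcal{R}(G_1)=\dim\mathcal{R}(G_2)$, the CS decomposition gives $\|(G_2G_2^{\dagger})(I-G_1G_1^{\dagger})\|=\|(I-G_2G_2^{\dagger})(G_1G_1^{\dagger})\|$, and the latter is $\le\|E\|\,\|G_1^{\dagger}\|$ via $(I-G_2G_2^{\dagger})G_1=-(I-G_2G_2^{\dagger})E$. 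Your text gestures at this (``$\mathcal{R}(G_2^{T})$ has the same dimension as $\mathcal{R}(G_1^{T})$'') but does not make the angle-swap explicit; as written the bound does not follow. Third, your ``self-improvement/quadratic inequality'' device is not how the golden-ratio constant arises here; Stewart obtains $(1+\sqrt{5})/2$ in the rank-deficient case from a sharper orthogonality-based combination of the three Wedin terms, not from bootstrapping $\|G_2^{\dagger}-G_1^{\dagger}\|$ back into the right-hand side. Your $\sqrt{2}$ argument for the full-rank case, by contrast, is correct: one projector term vanishes and the remaining two have orthogonal ranges (after taking adjoints), so the Pythagorean bound applies.
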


On the relationship between  the generalized singular values of the $(m,p,n)$-GMP $\{A, L\}$ and the singular values of $A$, we have the following lemma.

\begin{lemma}[\cite{tikhonov}, Theorem 2.4]\label{sigma}
For the $(m,p,n)$-GMP $\{A, L\}$, we have 
$1/\|\ca^\dag\| \le \bar{\sigma}_{q-i+1}/\sigma_{i} \le \|\ca\|$ for all $\sigma_{i}\neq0$, where $\ca=[A^T,L^T]^T$ and $\bar{\sigma}_{1} \geq \bar{\sigma}_{2} \geq \cdots \geq \bar{\sigma}_{q} \geq 0$ denote the singular values of $A$ with $q=\min\{m,n\}$. 
\end{lemma}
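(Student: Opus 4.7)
The plan is to exploit the GSVD of $\{A,L\}$ given in Definition \ref{def-gsvd} to simultaneously parameterize $\ca$ and $A$ via the same nonsingular factor $X$, and then apply standard multiplicative singular value inequalities. First I would write $A=U\Sigma_A X^{-1}$ and $L=V\Sigma_L X^{-1}$ and compute
\[
\ca^T\ca=X^{-T}(\Sigma_A^T\Sigma_A+\Sigma_L^T\Sigma_L)X^{-1}.
\]
Because the GSVD guarantees $\alpha_i^2+\beta_i^2=1$ for all $i$, the diagonal matrix $\Sigma_A^T\Sigma_A+\Sigma_L^T\Sigma_L$ equals $I_n$, so $\ca^T\ca=X^{-T}X^{-1}$. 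In particular the singular values of $\ca$ coincide with those of $X^{-1}$, which yields $\|\ca\|=\|X^{-1}\|$ and $\|\ca^\dag\|=\|X\|$, and gives us a clean expression for $(\ca^T\ca)^{1/2}$ and its inverse via an SVD of $X^{-1}$.

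Next I would introduce the ``normalized'' matrix $\widetilde{A}=A(\ca^T\ca)^{-1/2}$. A short eigendecomposition calculation based on the SVD of $X^{-1}$ shows that $\widetilde{A}^T\widetilde{A}$ is orthogonally similar to $\Sigma_A^T\Sigma_A=\diag(\alpha_1^2,\ldots,\alpha_n^2)$, so the singular values of $\widetilde{A}$ are precisely $|\alpha_1|,\ldots,|\alpha_n|$. In particular, since the $\alpha_j$ are nondecreasing in $j$, when listed in decreasing order the $i$-th singular value of $\widetilde{A}$ equals $\alpha_{n-i+1}$ (with the obvious truncation to $q=\min\{m,n\}$ entries).

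Finally, writing $A=\widetilde{A}\cdot(\ca^T\ca)^{1/2}$ and applying the standard inequalities
\[
\sigma_i(M)\,\sigma_{\min}(N)\le\sigma_i(MN)\le\sigma_i(M)\,\|N\|,
\]
valid for $N$ square and nonsingular, with $M=\widetilde{A}$ and $N=(\ca^T\ca)^{1/2}$, yields
\[
\frac{\sigma_i(\widetilde{A})}{\|\ca^\dag\|}\le\bar{\sigma}_i\le\sigma_i(\widetilde{A})\,\|\ca\|
\]
for every index $i\le q$. Dividing by $\sigma_i(\widetilde{A})$ whenever it is nonzero and then reindexing $i\mapsto q-i+1$ identifies $\sigma_i(\widetilde{A})$ with the generalized singular value $\sigma_i$ of $\{A,L\}$ in the convention of the statement, producing the desired two-sided bound. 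The main obstacle I anticipate is precisely this bookkeeping at the last step: the $\alpha_i$ in Definition \ref{def-gsvd} are arranged in \emph{increasing} order while the $\bar{\sigma}_i$ are \emph{decreasing}, and the two cases $m\ge n$ and $m<n$ (which change the value of $q$ and the number of trailing zeros among the $\sigma_i(\widetilde{A})$) must be reconciled with the convention that the ratio is only asserted for those $i$ with $\sigma_i\neq 0$.
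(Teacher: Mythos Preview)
The paper does not supply its own proof of this lemma: it is quoted verbatim as Theorem~2.4 of Hansen \cite{tikhonov} and simply invoked later in the error analysis. So there is nothing in the present paper to compare against.

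Your argument is essentially Hansen's original one and is sound. The key identity $\Sigma_A^T\Sigma_A+\Sigma_L^T\Sigma_L=I_n$ indeed follows from Definition~\ref{def-gsvd} (the first $n-r-s$ diagonal entries come from $\beta_i=1$, the last $r$ from $\alpha_i=1$, the middle ones from $\alpha_i^2+\beta_i^2=1$), so $\ca^T\ca=X^{-T}X^{-1}$, hence $\|\ca\|=\|X^{-1}\|$ and $\|\ca^\dag\|=\|X\|$ as you say. The computation showing that $\widetilde{A}^T\widetilde{A}$ is orthogonally similar to $\Sigma_A^T\Sigma_A$ is straightforward once you write the SVD $X^{-1}=P\Sigma Q^T$ and observe $(\ca^T\ca)^{-1/2}=Q\Sigma^{-1}Q^T$. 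The product inequalities $\sigma_i(M)\sigma_{\min}(N)\le\sigma_i(MN)\le\sigma_i(M)\|N\|$ for square nonsingular $N$ then give exactly the two-sided bound.

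The only point that deserves care is the one you already flag: the index matching. In the paper's later use of this lemma (the inequality $\sigma_{m-\ell_1}\le\|X\|\bar\sigma_{\ell_1+1}$ in the proof of Theorem~5.5) the $\sigma_i$ really are the $\alpha$'s, and your identity $\sigma_i(\widetilde{A})=\alpha_{n-i+1}$ is exactly what is needed there; the substitution $j=n-i+1$ lands you on the stated form when $q=n$, and when $q=m<n$ the restriction $\sigma_i\neq0$ forces $i\ge n-r-s+1\ge n-m+1$, so $n-i+1\le m$ and all indices remain in range. This bookkeeping is tedious but routine, and your plan handles it.
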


\subsubsection{Overdetermined case}
On the error analysis of Algorithm  \ref{alg:alg4}, we have the following result.
\begin{theorem}\label{thm:errb1}
Let $\{A, L\}$ be an $(m,p,n)$-GMP with $m\ge n$. Assume that the approximate GSVD of the GMP $\{A, L\}$ is computed via Algorithm \ref{alg:alg4} with uniform random test matrices. Suppose $\bx_{\lambda}$ is the solution of \eqref{tikhonov} and $\bx_{\lambda}^{R}$ is the minimum 2-norm solution of the problem
$
\min_{\bx\in\mathbb{R}^{n}}\left\| [(PP^{T}A QQ^{T})^T, \lambda (LQQ^{T})^T]^T  \bx-[ \bb^T,  \mathbf{0}^T]^T\right\|.
$
Then we have
\[
\frac{\|\bx_{\lambda}^{R}-\bx_{\lambda}\|}{\|\bx_{\lambda}\|} 
\le \epsilon \Big( \frac{1+\sqrt{5}}{2} \xi^{2}\nu_{\lambda} + \sqrt{2} \xi\|\mathcal{A}^{\dagger}\|\nu_{\lambda} \Big) + \frac{1+\sqrt{5}}{2} \lambda  \gamma_1 \xi^{2}\nu_{\lambda},,
\]
where $\mathcal{A} = [A^{T},\lambda L^{T}]^{T}$, $\nu_{\lambda} = \| \bb \|  / \|\bx_{\lambda}\|$, $\xi$  is a constant depending on $\lambda$, $A$, $L$, $\Omega_P$, and $\gamma_{1}$ is a constant depending on $L$, $\Omega_P$, and $\Omega_Q$.
\end{theorem}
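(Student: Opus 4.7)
The approach is to translate the error $\bx_\lambda^R-\bx_\lambda$ into an operator-norm perturbation of the pseudo-inverse applied to $[\bb^T,\mathbf{0}^T]^T$, and then to bound $\|\mathcal{A}_R^\dagger-\mathcal{A}^\dagger\|$ by going through a carefully chosen intermediate matrix so that each step can use the sharpest available form of the Stewart-type inequalities in Lemmas \ref{ab}--\ref{abl}. Writing $\mathcal{A}_R=[(PP^{T}AQQ^{T})^T,\lambda(LQQ^{T})^T]^{T}$, I would introduce the intermediate
\[
\mathcal{A}_I \;:=\; \begin{bmatrix} PP^{T}A \\ \lambda L \end{bmatrix},
\]
and split the error via the telescoping identity $\mathcal{A}_R^\dagger-\mathcal{A}^\dagger=(\mathcal{A}_R^\dagger-\mathcal{A}_I^\dagger)+(\mathcal{A}_I^\dagger-\mathcal{A}^\dagger)$. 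The motivation for this choice is that $\mathcal{A}_I-\mathcal{A}$ is a rank-preserving perturbation of size $O(\epsilon)$ involving only the random draw $\Omega_P$, whereas $\mathcal{A}_R-\mathcal{A}_I$ drops the column rank from $n$ to $\ell_2$ but is the only place where the factor $\lambda\|L(I-QQ^{T})\|_F$ enters.

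The first step is to verify the rank hypotheses. Since $P^{T}A$ has full row rank with probability one by the range-finder analysis of Section \ref{sec3}, the reasoning of \cite{2016} gives $\mathcal{N}(PP^{T}A)\cap\mathcal{N}(L)=\{\mathbf{0}\}$, so both $\mathcal{A}$ and $\mathcal{A}_I$ have full column rank $n$; the analogous argument with $\mathcal{N}(P^{T}AQ)\cap\mathcal{N}(LQ)=\{\mathbf{0}\}$ shows that $\rank(\mathcal{A}_R)=\ell_2$. Consequently Lemma \ref{abl} (equal-rank case) is applicable to $(\mathcal{A},\mathcal{A}_I)$ and, using the Step-1 guarantee $\|A-PP^{T}A\|_F\le\epsilon$ of Algorithm \ref{alg:alg4}, produces
\[
\|\mathcal{A}_I^{\dagger}-\mathcal{A}^{\dagger}\|\;\le\;\sqrt{2}\,\|\mathcal{A}^{\dagger}\|\,\|\mathcal{A}_I^{\dagger}\|\,\|\mathcal{A}-\mathcal{A}_I\|\;\le\;\sqrt{2}\,\|\mathcal{A}^{\dagger}\|\,\xi\,\epsilon,
\]
where $\xi$ is taken to majorize $\|\mathcal{A}_I^{\dagger}\|$ (and, for the next step, $\|\mathcal{A}_R^{\dagger}\|$). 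For $(\mathcal{A}_I,\mathcal{A}_R)$ the ranks disagree, so Lemma \ref{ab} applies and yields
\[
\|\mathcal{A}_R^{\dagger}-\mathcal{A}_I^{\dagger}\|\;\le\;\tfrac{1+\sqrt{5}}{2}\,\xi^{2}\,\|\mathcal{A}_I-\mathcal{A}_R\|.
\]
The block structure $\mathcal{A}_I-\mathcal{A}_R=[(PP^{T}A(I-QQ^{T}))^{T},\lambda(L(I-QQ^{T}))^{T}]^{T}$, together with the Step-2 guarantee $\|P^{T}A-P^{T}AQQ^{T}\|_F\le\epsilon$ and the definition $\gamma_{1}:=\|L(I-QQ^{T})\|_F$, bounds the Frobenius norm of this difference by $\epsilon+\lambda\gamma_{1}$. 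Inserting this into the telescoping inequality gives three terms matching the three summands in the theorem.

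Combining everything through $\bx_\lambda^R-\bx_\lambda=(\mathcal{A}_R^{\dagger}-\mathcal{A}^{\dagger})[\bb^{T},\mathbf{0}^{T}]^{T}$, bounding $\|\bx_\lambda^R-\bx_\lambda\|\le\|\mathcal{A}_R^{\dagger}-\mathcal{A}^{\dagger}\|\,\|\bb\|$, and dividing by $\|\bx_\lambda\|$ produces exactly the claimed inequality with $\nu_\lambda=\|\bb\|/\|\bx_\lambda\|$. The main technical obstacle I anticipate is the book-keeping of the constant $\xi$: it must dominate both $\|\mathcal{A}_I^{\dagger}\|$ and $\|\mathcal{A}_R^{\dagger}\|$ while the statement advertises it as depending only on $\lambda,A,L,\Omega_P$ (and not on $\Omega_Q$), so some care is needed to argue that the second compression by $Q$ does not inflate the pseudo-inverse norm beyond what $\mathcal{A}_I^{\dagger}$ already controls. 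Everything else is a routine application of the telescoping perturbation inequality and the range-finder error guarantees from Section \ref{sec3}.
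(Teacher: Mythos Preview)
Your proposal is correct and follows essentially the same route as the paper: the same intermediate matrix $\mathcal{A}_I=\tilde{\mathcal{A}}=[(PP^TA)^T,\lambda L^T]^T$, the same telescoping split, Lemma~\ref{abl} with constant $\sqrt{2}$ on the rank-preserving piece, and Lemma~\ref{ab} with constant $(1+\sqrt{5})/2$ on the rank-dropping piece. The one point you flag as the ``main technical obstacle''---controlling $\|\mathcal{A}_R^{\dagger}\|$ by a quantity independent of $\Omega_Q$---is exactly what the paper settles: using \cite[Corollary 4.4]{ss1990} it shows $\|(\tilde{\mathcal{A}}QQ^{T})^{\dagger}\|=\|Q(\tilde{\mathcal{A}}Q)^{\dagger}\|=\|(\tilde{\mathcal{A}}Q)^{\dagger}\|\le\|\tilde{\mathcal{A}}^{\dagger}\|$, so one may simply take $\xi=\|\tilde{\mathcal{A}}^{\dagger}\|$, which depends only on $\lambda,A,L,\Omega_P$ as advertised.
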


\begin{proof}
Let $\tilde{\mathcal{A}} = [(PP^{T}A)^{T},\lambda L^{T}]^{T}$ and
define $\delta\mathcal{A} = \tilde{\mathcal{A}}-\mathcal{A} = [(PP^{T}A-A)^T, 0]^T$. We note that $\rank([(P^TA)^T,L^T]^T)=n$ a.s. in Algorithm \ref{alg:alg4} since $\mathcal{A}$ is of full column rank. This shows that $\rank([(PP^TA)^T,\lambda L^T]^T)=n$ a.s. for  all $\lambda\neq 0$. Let $\bar{\bx}_{\lambda}^{R}$ be the minimum $2$-norm solution of the problem
$\min_{\bx\in\mathbb{R}^{n}}\|\tilde{\mathcal{A}}\bx-[\bb^T, \mathbf{0}^T]^T \|$.
Thus, 
\[
\|\bx_{\lambda}^{R}-\bx_{\lambda}\| \leq \|\bx_{\lambda}^{R}-\bar{\bx}_{\lambda}^{R}\| + \|\bar{\bx}_{\lambda}^{R} - \bx_{\lambda}\|,
\] 
where $\bx_{\lambda}=\mathcal{A}^\dag[\bb^T,{\bf 0}^T]^T$, $\bar{\bx}_{\lambda}^{R}=\tilde{\mathcal{A}}^\dag[\bb^T,{\bf 0}^T]^T$, and $\bx_{\lambda}^{R}=(\tilde{\mathcal{A}}QQ^T)^\dag [\bb^T,{\bf 0}^T]^T$.

We first estimate $\|\bx_{\lambda}^{R}-\bar{\bx}_{\lambda}^{R}\|$. By Lemma \ref{ab}  we obtain
\begin{eqnarray}\label{eq-1}
  &&\|\bx_{\lambda}^{R}-\bar{\bx}_{\lambda}^{R}\| \leq \|(\tilde{\mathcal{A}}QQ^{T})^{\dagger} - \tilde{\mathcal{A}}^{\dagger} \| \|\bb\| \nonumber\\
&\leq& \frac{1+\sqrt{5}}{2} \|
\tilde{\mathcal{A}}(QQ^{T}-I) \| \max\Big\{ \|(\tilde{\mathcal{A}}QQ^{T})^{\dagger}\|^{2}, \| \tilde{\mathcal{A}}^{\dagger} \|^{2}\Big\} \|\bb\| \nonumber\\
&\leq& \frac{1+\sqrt{5}}{2} \Big(\epsilon + \lambda \|L(QQ^{T}-I) \| \Big) \max\Big\{ \|(\tilde{\mathcal{A}}QQ^{T})^{\dagger}\|^{2}, \| \tilde{\mathcal{A}}^{\dagger} \|^{2}\Big\} \|\bb\|,
\end{eqnarray}
where the last inequality uses the fact that $\|PP^TA(QQ^T-I)\|_F\le \|P^TA- P^TAQQ^T\|_F$ $ \le\epsilon$. Using \cite[Corollary 4.4]{ss1990} we have 
\begin{eqnarray}\label{A1qqinv}
\|(\tilde{\mathcal{A}}QQ^{T})^{\dagger}\| = \|Q(\tilde{\mathcal{A}}Q)^{\dagger}\| = \|(\tilde{\mathcal{A}}Q)^{\dagger}\| \leq \|\tilde{\mathcal{A}}^{\dagger}\|.
\end{eqnarray}
Substituting  \eqref{A1qqinv} into   \eqref{eq-1}, we obtain
\begin{eqnarray}\label{eq-11}
\|\bx_{\lambda}^{R}-\bar{\bx}_{\lambda}^{R}\| \leq \frac{1+\sqrt{5}}{2} (\epsilon + \lambda \|L(QQ^{T}-I)\|) \|\tilde{\mathcal{A}}^{\dagger}\|^{2} \| \bb \| .
\end{eqnarray}

Next, we estimate $\|\bar{\bx}_{\lambda}^{R}-\bx_{\lambda}\|$. We note that $\rank(\tilde{\mathcal{A}})=\rank(\mathcal{A})=n$ a.s. for all $\lambda \neq 0$. We have by Lemma \ref{abl} and using $\|PP^{T}A-A\|<\epsilon$,
\begin{eqnarray}\label{eq-2}
\|\bar{\bx}_{\lambda}^{R}-\bx_{\lambda}\| \le \|\tilde{\mathcal{A}}^{\dagger} - \mathcal{A}^{\dagger} \| \| \bb \|  
\leq \sqrt{2} \|\tilde{\mathcal{A}} - \mathcal{A} \|  \|\tilde{\mathcal{A}}^{\dagger}\| \| \mathcal{A}^{\dagger} \| \| \bb \|  \le \sqrt{2} \epsilon \|\tilde{\mathcal{A}}^{\dagger}\| \|\mathcal{A}^{\dagger}\| \| \bb \| .
\end{eqnarray}

Using \eqref{eq-11}--\eqref{eq-2}, we obtain the error bound
\begin{eqnarray*}\label{eq}
&&\frac{\|\bx_{\lambda}^{R}-\bx_{\lambda}\|}{\|\bx_{\lambda}\|} 
\leq \frac{1+\sqrt{5}}{2} (\epsilon + \lambda \|L(QQ^{T}-I)\|) \|\tilde{\mathcal{A}}^{\dagger}\|^{2}\nu_{\lambda} + \sqrt{2} \epsilon \|\tilde{\mathcal{A}}^{\dagger}\| \|\mathcal{A}^{\dagger}\|\nu_{\lambda} \\
&&\le \epsilon \Big( \frac{1+\sqrt{5}}{2} \xi^{2}\nu_{\lambda} + \sqrt{2} \xi\|\mathcal{A}^{\dagger}\|\nu_{\lambda} \Big) + \frac{1+\sqrt{5}}{2} \lambda  \gamma_1 \xi^{2}\nu_{\lambda},
\end{eqnarray*}
where $\nu_{\lambda} = \| \bb \|  / \|\bx_{\lambda}\|$, $\xi=\|\tilde{\mathcal{A}}^{\dagger}\|$ depending on $\lambda$, $A$, $L$, $\Omega_P$,  and $\gamma_{1} = \|L(QQ^{T}-I)\|$ depending on $L$, $\Omega_P$, and $\Omega_Q$.
\end{proof}

\begin{remark}
In Theorem {\rm \ref{thm:errb1}}, let $\bar{\sigma}_{1}\geq\bar{\sigma}_{2}\geq\cdots\geq\bar{\sigma}_{n}\ge 0$ be the singular values of $A$. If the tolerance  $\epsilon$ and the regularization parameter  $\lambda$ is such that $\epsilon = \mathcal{O}(\bar{\sigma}_{k+1})=\lambda$ for some target rank $k$, then
\[
\frac{\|\bx_{\lambda}^{R}-\bx_{\lambda}\|}{\|\bx_{\lambda}\|} 
\le \Big( \frac{1+\sqrt{5}}{2}\bar{c}_1 \xi^{2}\nu_{\lambda} + \sqrt{2} \bar{c}_1\xi\|\mathcal{A}^{\dagger}\|\nu_{\lambda}  + \frac{1+\sqrt{5}}{2} \bar{c}_2\gamma_1 \xi^{2}\nu_{\lambda} \Big)\bar{\sigma}_{k+1}+\mathcal{O}(\bar{\sigma}_{k+1}^{2}),
\]
where $\bar{c}_1$ and $\bar{c}_2$ are two constants depending on $\epsilon$ and $\lambda$, respectively.
\end{remark}

\subsubsection{Underdetermined case}
For Algorithm \ref{alg:alg5}, we have the following error bound.
\begin{theorem}
Let $\{A, L\}$ be an $(m,p,n)$-GMP with $m\le n$. Let $\bar{\sigma}_{1}\geq\bar{\sigma}_{2}\geq\cdots\geq\bar{\sigma}_{m}\ge 0$ be the singular values of $A$. Assume that the approximate GSVD of the GMP $\{A, L\}$ is computed via Algorithm {\rm\ref{alg:alg5}} with uniform random test matrices. Suppose $\bx_{\lambda}$ is the solution of \eqref{tikhonov} and $\bx_{\lambda}^{R}$ is the minimum 2-norm solution of the problem
\[\min_{\bx\in\mathbb{R}^{n}}\left\|\left[\begin{array}{c}
PP^{T}A QQ^{T} \\
                                          \lambda LQQ^{T} 
                                        \end{array}
\right] \bx-\left[\begin{array}{c}
                 \bb \\
                 \mathbf{0} 
               \end{array}
\right]\right\|.
\]
If the tolerance  $\epsilon$ and the regularization parameter  $\lambda$ is such that $\epsilon = \mathcal{O}(\bar{\sigma}_{k+1})=\lambda$ for some target rank $k$, then
\begin{eqnarray*}
\frac{\|\bx_{\lambda}^{R}-\bx_{\lambda}\|}{\|\bx_{\lambda}\|}
\le  \|\ca^\dag\| \Big( c\bar{c}_1 (\xi\tilde{\nu}_{\lambda} +\nu_\lambda ) 
+c\bar{c}_2\gamma_1 \nu_\lambda 
+ \Big(1+c \kappa(\ca) \nu_\lambda \Big)\gamma_2 \Big)\bar{\sigma}_{k+1}
+ \mathcal{O}(\bar{\sigma}_{k+1}^{2}),
\end{eqnarray*}
where $\mathcal{A} = [A^{T},\lambda L^{T}]^{T}$, $c= (1+\sqrt{5})/2$, $\nu_{\lambda} = \|\mathcal{A}^{\dagger}\| \tilde{\nu}_{\lambda}$ with $\tilde{\nu}_{\lambda} =\| \bb \|  / \|x_{\lambda}\|$, $\kappa(\mathcal{A}) = \|\mathcal{A}\|\|\mathcal{A}^{\dagger}\|$, $ \gamma_{1}$ is a constant depending on $L$,  $\Omega_Q$, $\bar{c}_1$ is a constant depending on $\epsilon$, $\bar{c}_2$ is a constant depending on  $\lambda$, and $\xi$, $\gamma_{2}$ are two constants depending on $\lambda$,  $\Omega_Q$, $\Omega_P$, $A$, $L$. 
\end{theorem}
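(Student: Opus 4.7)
The plan is to mirror the argument of Theorem~\ref{thm:errb1} with the roles of $P$ and $Q$ interchanged to reflect the order of sampling in Algorithm~\ref{alg:alg5}. The main new wrinkle is that the natural one-sided intermediate $\tilde{\mathcal{A}} = \mathcal{A} QQ^T$ has rank $\ell_1 < n$, so it no longer matches $\rank(\mathcal{A}) = n$; consequently Lemma~\ref{ab} (with the $\max\{\|\cdot\|^2\}$ factor) must take over from Lemma~\ref{abl} on the outer step, where the latter sufficed in the overdetermined analysis.

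Set $\mathcal{A}=[A^T,\lambda L^T]^T$, $\tilde{\mathcal{A}} = \mathcal{A}QQ^T$, and $\hat{\mathcal{A}}=[(PP^TAQQ^T)^T,\lambda(LQQ^T)^T]^T$, and introduce $\bar{\bx}_\lambda = \tilde{\mathcal{A}}^\dag [\bb^T,\mathbf{0}^T]^T$. First I would check $\rank(\hat{\mathcal{A}}) = \rank(\tilde{\mathcal{A}}) = \ell_1$ a.s.\ using the full-row-rank guarantees of Algorithm~\ref{alg:alg1} together with $\mathcal{N}(A)\cap\mathcal{N}(L)=\{\mathbf{0}\}$, exactly as done in the lead-up to Algorithm~\ref{alg:alg5}. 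Then split
\[
\|\bx_\lambda^R-\bx_\lambda\| \le \|\bx_\lambda^R-\bar{\bx}_\lambda\| + \|\bar{\bx}_\lambda-\bx_\lambda\|.
\]
For the first summand the ranks match, so Lemma~\ref{abl} yields $\|\hat{\mathcal{A}}^\dag - \tilde{\mathcal{A}}^\dag\| \le c\,\|\hat{\mathcal{A}}^\dag\|\,\|\tilde{\mathcal{A}}^\dag\|\,\|\hat{\mathcal{A}}-\tilde{\mathcal{A}}\|$, with $\|\hat{\mathcal{A}}-\tilde{\mathcal{A}}\| \le \|(PP^T-I)AQ\|\le\epsilon$ from Step~2 of Algorithm~\ref{alg:alg5}. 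The identity $(\mathcal{A}QQ^T)^\dag = Q(\mathcal{A}Q)^\dag$ (via \cite[Corollary~4.4]{ss1990}) combined with $\sigma_{\min}(\mathcal{A}Q)\ge\sigma_{\min}(\mathcal{A})$ gives $\|\tilde{\mathcal{A}}^\dag\|\le\|\mathcal{A}^\dag\|$, so this contribution has order $c\,\|\mathcal{A}^\dag\|\,\xi\,\tilde{\nu}_\lambda\,\epsilon$ with $\xi=\|\hat{\mathcal{A}}^\dag\|$.

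For the second summand the ranks differ, so I would invoke Lemma~\ref{ab} to obtain $\|\tilde{\mathcal{A}}^\dag-\mathcal{A}^\dag\| \le c\,\|\mathcal{A}^\dag\|^2\,\|\tilde{\mathcal{A}}-\mathcal{A}\|$, together with $\|\tilde{\mathcal{A}}-\mathcal{A}\| = \|\mathcal{A}(I-QQ^T)\| \le \|A(I-QQ^T)\|+\lambda\|L(I-QQ^T)\| \le \epsilon+\lambda\gamma_1$, where $\gamma_1=\|L(I-QQ^T)\|$ and Step~1 of Algorithm~\ref{alg:alg5} ensures $\|A(I-QQ^T)\|\le\epsilon$. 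Splitting by the two summands of $\tilde{\mathcal{A}}-\mathcal{A}$ and using $\nu_\lambda=\|\mathcal{A}^\dag\|\tilde{\nu}_\lambda$ to rescale produces the $\|\mathcal{A}^\dag\|\nu_\lambda$ and $\|\mathcal{A}^\dag\|\gamma_1\nu_\lambda$ contributions appearing in the statement.

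The delicate third piece $(1+c\kappa(\mathcal{A})\nu_\lambda)\gamma_2$ captures a genuinely underdetermined effect: $\bx_\lambda^R\in\mathcal{R}(Q)$ while $\bx_\lambda$ need not lie there, so the orthogonal leakage $(I-QQ^T)\bx_\lambda$ cannot be absorbed into the bounds above. I plan to isolate it through a Wedin-type decomposition of $\hat{\mathcal{A}}^\dag-\mathcal{A}^\dag$ into an in-range term (already covered by the two steps above) and a projector-residual term whose norm, after dividing by $\|\bx_\lambda\|$ and grouping with $\kappa(\mathcal{A})=\|\mathcal{A}\|\|\mathcal{A}^\dag\|$, yields exactly $(1+c\kappa(\mathcal{A})\nu_\lambda)\gamma_2$. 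Substituting $\epsilon=\bar{c}_1\bar{\sigma}_{k+1}$ and $\lambda=\bar{c}_2\bar{\sigma}_{k+1}$ and absorbing all cross terms $\epsilon^2$, $\epsilon\lambda$, $\lambda^2$ into $\mathcal{O}(\bar{\sigma}_{k+1}^2)$ then delivers the stated bound. I expect the main obstacle to be isolating $\gamma_2$ as a standalone factor rather than burying it in $\xi$ or $\|\mathcal{A}^\dag\|^2$: a naive triangle inequality through $\bar{\bx}_\lambda$ alone hides this refined structure, so the Wedin expansion must be done delicately enough to separate the projection/leakage error from the pure rank-deficiency effect.
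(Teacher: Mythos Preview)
Your first step---bounding $\|\bx_\lambda^R-\bar{\bx}_\lambda\|$ via Lemma~\ref{abl} with matching ranks $\ell_1$ and the estimates $\|(\mathcal{A}QQ^T)^\dag\|\le\|\mathcal{A}^\dag\|$, $\|\hat{\mathcal{A}}-\tilde{\mathcal{A}}\|\le\epsilon$---is exactly what the paper does. The divergence is in the second step. You propose to handle the rank jump from $\ell_1$ to $n$ by a single application of Lemma~\ref{ab}, which is valid and immediately yields $\|\bar{\bx}_\lambda-\bx_\lambda\|\le c\|\mathcal{A}^\dag\|^2(\epsilon+\lambda\gamma_1)\|\bb\|$. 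The paper instead inserts a \emph{truncated GSVD} intermediate: it defines $\mathcal{A}_{\ell_1}$ by zeroing the $n-\ell_1$ smallest generalized singular values in the exact GSVD of $\{A,L\}$, so that $\rank(\mathcal{A}_{\ell_1})=\ell_1=\rank(\mathcal{A}QQ^T)$, applies Lemma~\ref{abl} between these two, and then controls $\|\bx_{\ell_1,\lambda}^L-\bx_\lambda\|=\|X_1X_1^\dag\bx_\lambda\|$ and $\|\mathcal{A}-\mathcal{A}_{\ell_1}\|$ explicitly. Lemma~\ref{sigma} is invoked to show $\|X_1\|\le\|\mathcal{A}^\dag\|\sqrt{\|X\|^2\bar\sigma_{\ell_1+1}^2+\lambda^2}$, and that is precisely where the constant $\gamma_2=\|X^{-1}\|\sqrt{\|X\|^2+\bar c_2^2}$ originates.

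So the $\gamma_2$ term is an artifact of the paper's detour through the truncated GSVD, not an intrinsic feature of the problem. Your direct route via Lemma~\ref{ab} already delivers the full bound \emph{without} the $(1+c\kappa(\mathcal{A})\nu_\lambda)\gamma_2$ piece---a strictly sharper inequality. Consequently your worry that the leakage $(I-QQ^T)\bx_\lambda$ ``cannot be absorbed'' is unfounded: Lemma~\ref{ab} makes no rank assumption and absorbs it automatically. The Wedin-type decomposition you sketch is therefore unnecessary, and trying to force it to manufacture $\gamma_2$ would only reproduce the paper's looser bound. Drop that third step and you have a cleaner proof than the paper's; if you want to match the stated form exactly, just note that your bound is dominated by the stated one since $\gamma_2\ge 0$.
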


\begin{proof}
Assume that $\underline{\bx}_{\lambda}^{R}$ is the minimum 2-norm solution of the problem
$\min_{\bx\in\mathbb{R}^{n}}$ $\|\mathcal{A}QQ^{T}\bx-[\bb^T, \mathbf{0}^T]^T \|$.
Thus,  
\[
\|\bx_{\lambda}^{R}-\bx_{\lambda}\| \leq \|\bx_{\lambda}^{R}-\underline{\bx}_{\lambda}^{R}\| + \|\underline{\bx}_{\lambda}^{R} - \bx_{\lambda}\|, 
\]
where $\bx_{\lambda}=\ca^\dag[\bb^T,{\bf 0}^T]^T$, $\underline{\bx}_{\lambda}^{R}=(\ca QQ^T)^\dag[\bb^T,{\bf 0}^T]^T$, and $\bx_{\lambda}^{R}=(\tilde{\mathcal{A}}QQ^T)^\dag [\bb^T,{\bf 0}^T]^T$ with $\tilde{\mathcal{A}}=[(PP^{T}A)^{T},\lambda L^{T}]^{T}$.

We first estimate $\|\bx_{\lambda}^{R}-\underline{\bx}_{\lambda}^{R}\|$.  Since $\rank(\tilde{\mathcal{A}}QQ^T)=\rank(\ca Q)=\rank(\mathcal{A}QQ^T)$ $=\ell_1\le m\le n$ a.s.  for all $\lambda \neq 0$, by Lemma \ref{abl}, we obtain
\begin{eqnarray}\label{i}
  \| \bx_{\lambda}^{R}-\underline{\bx}_{\lambda}^{R}\| &\leq& \|(\tilde{\mathcal{A}}QQ^{T})^{\dagger} - (\mathcal{A}QQ^{T})^{\dagger}\| \| \bb \|  \nonumber\\
  &\leq& \frac{1+\sqrt{5}}{2} \|\tilde{\mathcal{A}}QQ^{T} - \mathcal{A}QQ^{T}\| \|(\tilde{\mathcal{A}}QQ^{T})^{\dagger}\| \|(\mathcal{A}QQ^{T})^{\dagger}\| \| \bb \|  \nonumber\\
  &\leq& \frac{1+\sqrt{5}}{2} \epsilon \|\tilde{\mathcal{A}}^{\dagger}\| \|\mathcal{A}^{\dagger}\| \| \bb \| ,
\end{eqnarray}
where the third inequality holds due to the fact that
$\|(\mathcal{A}QQ^{T})^{\dagger}\| = \|Q(\mathcal{A}Q)^{\dagger}\| = \|(\mathcal{A}Q)^{\dagger}\| \leq \|\mathcal{A}^{\dagger}\|$,  $\|(\tilde{\ca}QQ^{T})^{\dagger}\| = \|Q(\tilde{\ca}Q)^{\dagger}\| = \|(\tilde{\ca}Q)^{\dagger}\| \leq \|\tilde{\ca}^{\dagger}\|$, and $\|\tilde{\mathcal{A}}QQ^{T} - \mathcal{A}QQ^{T}\| = \|PP^{T}AQ-AQ\| < \epsilon$.

We now estimate $\|\underline{\bx}_{\lambda}^{R} - \bx_{\lambda}\|$. Let  the GSVD of the $(m,p,n)$-GMP $\{A, L\}$  be given by  \eqref{def} with $B=L$. Denote 
\BE \label{def:al}
A_{\ell_1} = U\left[
             \begin{array}{cc}
               0 &  \\
                 & D_1^{(n-\ell_1)} \\
             \end{array}
           \right]X^{-1},\quad 
L_{\ell_1} =V\left[
             \begin{array}{cc}
             D_2^{(n-\ell_1)}  &  \\
                 & 0 \\
             \end{array}
           \right]X^{-1},
\EE
where $D_1^{(n-\ell_1)}$ equals $D_1$ with the $(r+s-\ell_1)$ smallest $\alpha_{i}$'s being replaced by zeros and  $D_2^{(n-\ell_1)}$ equals $D_2$ with the $(r+s-\ell_1)$  largest $\beta_{i}$'s being replaced by ones.  Assume that $\bx_{\ell_1,\lambda}^{L}$ is the minimum $2$-norm solution of the problem
$\min_{\bx\in\mathbb{R}^{n}} \|[A_{\ell_1}^T, \lambda L_{\ell_1}^T]^T \bx-[\bb^T, \mathbf{0}^T]^T\|$.
Then $\bx_{\ell_1,\lambda}^{L} = \mathcal{A}_{\ell_1}^{\dagger}[\bb^{T},\mathbf{0}^{T}]^{T}$ with
 $\mathcal{A}_{\ell_1} = [A_{\ell_1}^{T},\lambda L_{\ell_1}^{T}]^{T}$. 
By the definition of $\ca_{\ell_1}$ we have
\[\mathcal{A}_{\ell_1} = \mathcal{A}X\left[
                                  \begin{array}{cc}
                                   0 &   \\
                                      & I_{\ell_1} \\
                                  \end{array}
                                \right]X^{-1} \quad\mbox{and}\quad
\mathcal{A}_{\ell_1}^{\dagger} = (I-X_{1}X_{1}^{\dagger})\mathcal{A}^{\dagger},
\]
where $X_{1} = X(:,1:n-\ell_1)$.  Then, 
$
\|\underline{\bx}_{\lambda}^{R}-\bx_{\lambda}\| \leq \|\underline{\bx}_{\lambda}^{R}-\bx_{\ell_1,\lambda}^{L}\| + \|\bx_{\ell_1,\lambda}^{L} - \bx_{\lambda}\|.
$

In the following, we estimate $\|\underline{\bx}_{\lambda}^{R}-\bx_{\ell_1,\lambda}^{L}\|$. We note that 
$\rank(\mathcal{A}QQ^{T})=\ell_1=\rank(\mathcal{A}_{\ell_1})$ a.s. for all $\lambda\ne 0$.
By Lemma \ref{abl} we obtain
\begin{eqnarray}\label{iia1}
 \|\underline{\bx}_{\lambda}^{R}-\bx_{\ell_1,\lambda}^{L}\| &\leq& \|(\mathcal{A}QQ^{T})^{\dagger} - \mathcal{A}_{\ell_1}^{\dagger}\|\| \bb \|  \nonumber\\
 &\leq& \frac{1+\sqrt{5}}{2} \|\mathcal{A}QQ^{T} - \mathcal{A}_{\ell_1}\| \|(\mathcal{A}QQ^{T})^{\dagger}\| \|\mathcal{A}_{\ell_1}^{\dagger}\| \| \bb \| .
\end{eqnarray}
Using $\|\mathcal{A}QQ^{T} - \mathcal{A}_{\ell_1}\| \leq \|\mathcal{A}QQ^{T} - \mathcal{A}\| + \|\mathcal{A} - \mathcal{A}_{\ell_1}\|$ and
\[\|\mathcal{A}QQ^{T} - \mathcal{A}\| \leq \|AQQ^{T}-A\|+\lambda\|L(QQ^{T}-I)\| \leq \epsilon + \lambda\|L(QQ^{T}-I)\|.\]
From the GSVD of $\{A,L\}$, we have
\[\|\mathcal{A} - \mathcal{A}_{\ell_1}\| = \left\|\mathcal{A}X\left[
                                                           \begin{array}{cc}
                                                             I_{n-\ell_1} &   \\
                                                               & 0 \\
                                                           \end{array}
                                                         \right]
X^{-1}\right\| = \|\mathcal{A}[X_{1},0]X^{-1}\| \leq \|X_{1}\|\|X^{-1}\|\|\mathcal{A}\|. \]
Furthermore,  $\|\mathcal{A}_{\ell_1}^{\dagger}\| = \|(I-X_{1}X_{1}^{\dagger})\mathcal{A}^{\dagger}\| \leq \|\mathcal{A}^{\dagger}\|$.
Hence, \eqref{iia1} is reduced to
\begin{eqnarray}\label{iia}
 \|\underline{\bx}_{\lambda}^{R}-\bx_{\ell,\lambda}^{L}\| \leq \frac{1+\sqrt{5}}{2} \Big(\epsilon + \lambda\|L(QQ^{T}-I)\| + \|X_{1}\|\|X^{-1}\|\|\mathcal{A}\| \Big) \|\mathcal{A}^{\dagger}\|^{2} \| \bb \| . 
\end{eqnarray}

Next, we estimate $\|\bx_{\ell_1,\lambda}^{L} - \bx_{\lambda}\|$. We note that $\bx_{\lambda} - \bx_{\ell_1,\lambda}^{L} = X_{1}X_{1}^{\dagger}\bx_{\lambda}$. Thus,
\begin{eqnarray}\label{iib}
\|\bx_{\lambda} - \bx_{\ell_1,\lambda}^{L}\| \leq \|X_{1}\|\|X_{1}^{\dagger}\|\|\bx_{\lambda}\| \leq \|X_{1}\|\|X^{-1}\|\|\bx_{\lambda}\|.
\end{eqnarray}
From the GSVD of $\{A,L\}$, and since $\mathcal{A}$ is of full column rank, we have
\[
\ca X_{1} =  \left[
                     \begin{array}{cc}
                       U &  \\
                        & V \\
                     \end{array}
                   \right]  \left[
                     \begin{array}{c}
                     \Sigma_A^{(1)}\\
                     \lambda\Sigma_L^{(1)}
                     \end{array}
                   \right] \quad\mbox{and}\quad
X_{1} =\ca^\dag \left[
                     \begin{array}{cc}
                       U &  \\
                        & V \\
                     \end{array}
                   \right]  \left[
                     \begin{array}{c}
                     \Sigma_A^{(1)}\\
                     \lambda\Sigma_L^{(1)}
                     \end{array}
                   \right],                  
\]
where $\Sigma_A^{(1)} = \Sigma_A(:,1:n-\ell_1)$ and $\Sigma_L^{(1)} = \Sigma_L(:,1:n-\ell_1)$. It is easy to check that
\BE\label{X1}
\|X_{1}\| \le \|\mathcal{A}^{\dagger}\|\sqrt{\sigma_{m-\ell_1}^{2}+\lambda^{2}} \leq \|\mathcal{A}^{\dagger}\|\sqrt{\|X\|^{2}\bar{\sigma}_{\ell_1+1}^{2}+\lambda^{2}},
\EE
where the second inequality uses the fact that $\sigma_{m-\ell_1} \leq \|X\|\bar{\sigma}_{\ell_1+1}$ from Lemma \ref{sigma}.

From \eqref{i}, \eqref{iia}, \eqref{iib}, \eqref{X1}, $\epsilon =\bar{c}_1 \bar{\sigma}_{k+1} +\mathcal{O}(\bar{\sigma}_{k+1}^{2})$ and $\lambda =\bar{c}_2\bar{\sigma}_{k+1} + \mathcal{O}(\bar{\sigma}_{k+1}^{2})$, we obtain the error bound
\begin{eqnarray*}
\frac{\|\bx_{\lambda}^{R}-\bx_{\lambda}\|}{\|\bx_{\lambda}\|}
\le  \|\ca^\dag\| \Big( c\bar{c}_1 (\xi\tilde{\nu}_{\lambda} +\nu_\lambda ) 
+c\bar{c}_2\gamma_1 \nu_\lambda 
+ \Big(1+c \kappa(\ca) \nu_\lambda \Big)\gamma_2 \Big)\bar{\sigma}_{k+1}
+ \mathcal{O}(\bar{\sigma}_{k+1}^{2}),
\end{eqnarray*}
where  $\nu_{\lambda} = \|\mathcal{A}^{\dagger}\| \tilde{\nu}_{\lambda}$ with $\tilde{\nu}_{\lambda} =\| \bb \|  / \|x_{\lambda}\|$,  $\xi=\|\tilde{\mathcal{A}}^{\dagger}\|$, $\gamma_{1} = \|L(QQ^{T}-I)\|$, $\gamma_{2} = \|X^{-1}\|\sqrt{\|X\|^{2}+\bar{c}_2^2}$ depending on $\lambda$, $\Omega_Q$, $\Omega_P$, $A$, $L$.
\end{proof}

\section{Numerical examples}\label{sec6}
In this section, we present some numerical experiments  of Algorithms \ref{alg:alg4} and \ref{alg:alg5} for solving the  large-scale Tikhonov regularization problem \eqref{tikhonov}. For demonstration purposes, we compare the proposed algorithms with  the classical GSVD, TGSVD in \cite{tikhonov}, RGSVD in \cite{2016,xiang2015} and MTRSVD in \cite{jia2018}. All numerical tests were carried out in MATLAB R2021b running on a laptop with AMD Ryzen 9 5900HX of  3.30 GHz CPU. 

In our numerical experiments, we adopt the following notation:
\begin{itemize}
\item $l_s$ means the number of samples. It is determined by the number of columns of the orthonormal matrix $Q$ of RGSVD  \cite{2016}. It is also used as the truncation parameter of the TGSVD, RGSVD  \cite{xiang2015} and MTRSVD. Denote by $l_{1}$ and $l_{2}$  the column numbers of  $P$ and $Q$ generated by Algorithms \ref{alg:alg4} and \ref{alg:alg5}.

\item $\hat{\bx}$ is the true solution of the ill-posed least squares problem $\min_{\bx\in \Rn} \|A\bx-\bb\|^2$, $\bx_{\rm gsvd}$ is the approximate solution to (\ref{tikhonov}) by GSVD (which is computed by {\tt gsvd} in {\tt MATLAB}),  $\bx_{\rm tgsvd}$ is the  TGSVD solution to \eqref{tikhonov},
$\bx_{\rm rgsvd}$ is the approximate regularized solution via RGSVD, $\bx_{\rm mtrsvd}$ is the  MTRSVD solution to \eqref{tikhonov}, $\bx_{\rm alg. \ref{alg:alg4}}$ and  $\bx_{\rm alg. \ref{alg:alg5}}$ is the approximate regularized solutions via  Algorithms \ref{alg:alg4} and \ref{alg:alg5}, respectively.

\item $E_{\rm gsvd}=\|\hat{\bx}-\bx_{\rm gsvd}\|/\|\hat{\bx}\|$, $E_{\rm tgsvd}=\|\hat{\bx}-\bx_{\rm tgsvd}\|/\|\hat{\bx}\|$, $E_{\rm rgsvd}=\|\hat{\bx}-\bx_{\rm rgsvd}\|/\|\hat{\bx}\|$,  $E_{\rm mtrsvd}=\|\hat{\bx}-\bx_{\rm mtrsvd}\|/\|\hat{\bx}\|$,  $E_{\rm alg. \ref{alg:alg4}}=\|\hat{\bx}-\bx_{\rm alg. \ref{alg:alg4}}\|/\|\hat{\bx}\|$,  and $E_{\rm alg. \ref{alg:alg5}}=\|\hat{\bx}-\bx_{\rm alg. \ref{alg:alg5}}\|/\|\hat{\bx}\|$ represent the relative errors. 

\item $t_{\rm gsvd}$, $t_{\rm tgsvd}$, $t_{\rm rgsvd}$, $t_{\rm mtrsvd}$, $t_{\rm alg. \ref{alg:alg4}}$, and $t_{\rm alg. \ref{alg:alg5}}$ denote the running time  (in seconds) of GSVD, TGSVD, RGSVD and MTRSVD,  Algorithm \ref{alg:alg4}, and Algorithm \ref{alg:alg5},  respectively.

\item The tolerance $\epsilon$ is set to be $\epsilon=10^{-2}$.
\end{itemize}
\begin{example} \label{ex1} {\rm (overdetermined examples from Hansen Tools \cite{Hansen})}
The test examples  {\tt baart}, {\tt deriv2}, {\tt foxgood}, {\tt gravity}, {\tt heat}, {\tt phillips} and {\tt shaw} are from Hansen Tools \cite{Hansen}. Let
$
\bb=\bb+\delta \|\bb\| \bd\zeta/\|\bd\zeta\|,
$
where $\delta>0$ is the relative noise level, $\bd\zeta={\tt randn}(m,1)$ is a random vector.  In our numerical tests, we set $\delta=10^{-3}$,  $m=n=2048$, the parameter $\lambda$ is selected by the GCV method or the L-curve rule,  and the regularization matrix is $L=(l_{ij})\in\R^{(n-1)\times n}$ with $l_{ii}=1$ and $l_{i,i+1}=-1$ for all $1\le i\le n-1$ and $l_{ij}=0$ otherwise \cite{tikhonov}. 
\end{example}

\begin{table}[!ht]
 \centering {\scriptsize
\caption{The comparison results (GCV) for Example \ref{ex1} with $m=n=2048$}
\label{Table:image_over_gcv}
\begin{tabular}{l|ccccccc}
  \hline
  $n=2048$ & {\tt baart}  & {\tt deriv2} & {\tt foxgood} & {\tt gravity} & {\tt heat}  & {\tt phillips}  & {\tt shaw}   \\
  \hline
$E_{\rm gsvd}$  &	1.07E-01	&	1.56E-02	&	2.96E-02	&	1.92E-02	&	2.11E-02	&	7.39E-03	&	3.30E-02	\\
$E_{\rm tgsvd}$ in \cite{tikhonov} &	1.03E-01	&	8.93E-02	&	5.61E-02	&	2.47E-02	&	4.60E-02	&	7.84E-02	&	3.75E-02	\\
$E_{\rm rgsvd}$ in \cite{2016}  &	1.08E-01	&	9.86E-02	&	5.52E-02	&	1.76E-02	&	5.12E-02	&	6.91E-03	&	3.94E-02	\\
$E_{\rm rgsvd}$ in \cite{xiang2015}  &	1.17E-01	&	3.38E-02	&	1.74E-02	&	1.34E-02	&	5.75E-02	&	1.65E-02	&	4.61E-02	\\
$E_{\rm mtrsvd}$ in \cite{jia2018}  &	1.03E-01	&	8.90E-02	&	5.60E-02	&	2.57E-02	&	4.84E-02	&	5.02E-02	&	3.91E-02	\\
$E_{\rm alg. \;\ref{alg:alg4} }$ &	1.17E-01	&	4.36E-02	&	1.45E-02	&	1.07E-02	&	4.59E-02	&	6.90E-03	&	4.43E-02	\\
$t_{\rm gsvd}$  &	2.8468 	&	3.1576 	&	2.8428 	&	2.6728 	&	2.7477 	&	2.8402 	&	2.7061 	\\
$t_{\rm tgsvd}$ in \cite{tikhonov} &	2.7326 	&	2.9571 	&	2.6380 	&	2.6686 	&	2.6651 	&	3.0562 	&	2.6721 	\\
$t_{\rm rgsvd}$ in \cite{2016}  &	2.0730 	&	2.0586 	&	2.0837 	&	2.1551 	&	2.0423 	&	2.3683 	&	2.2010 	\\
$t_{\rm rgsvd}$ in \cite{xiang2015}  &	0.2610 	&	0.2528 	&	0.2053 	&	0.2817 	&	0.4768 	&	0.5121 	&	0.2637 	\\
$t_{\rm mtrsvd}$ in \cite{jia2018}  &	0.3444 	&	1.7598 	&	1.9283 	&	0.9727 	&	0.7929 	&	0.6043 	&	1.0306 	\\
$t_{\rm alg.\; \ref{alg:alg4} }$ &	0.0637 	&	0.0451 	&	0.0609 	&	0.0508 	&	0.0612 	&	0.0622 	&	0.0475 	\\
$l_{s}$&	4	&	6	&	3	&	11	&	23	&	30	&	8	\\
$l_{1}\times l_{2}$&	$4\times4$	&	$4\times4$	&	$3\times3$	&	$11\times11$	&	$25\times25$	&	$30\times30$	&	$8\times8$	\\
$\lambda_{\rm gsvd}$  &	3.23E-01	&	1.35E-02	&	1.08E-01	&	5.22E-01	&	9.74E-03	&	1.40E+00	&	1.92E-01	\\
$\lambda_{\rm tgsvd}$ in \cite{tikhonov} &	1.24E-02	&	1.47E-01	&	3.06E-02	&	2.24E-01	&	4.78E-02	&	6.10E-01	&	1.87E-02	\\
$\lambda_{\rm rgsvd}$ in \cite{2016}  &	1.05E+00	&	3.34E-01	&	2.02E+00	&	9.96E-01	&	4.95E-02	&	1.65E+00	&	3.96E-01	\\
$\lambda_{\rm rgsvd}$ in \cite{xiang2015}  &	1.02E+00	&	1.83E-01	&	1.54E+00	&	8.18E-01	&	3.40E-02	&	4.12E-01	&	3.18E-01	\\
$\lambda_{\rm alg. \;\ref{alg:alg4} }$ &	1.02E+00	&	9.83E-01	&	1.66E+00	&	6.23E-01	&	4.41E-02	&	1.70E+00	&	2.62E-01	\\
    \hline
\end{tabular}}
\end{table}

\begin{table}[!ht]
 \centering {\scriptsize
\caption{The comparison results (L-curve) for  Example \ref{ex1} with $m=n=2048$}
\label{Table:image_over_lcurve}
\begin{tabular}{l|ccccccc}
  \hline
  $n=2048$ & {\tt baart}  & {\tt deriv2} & {\tt foxgood} & {\tt gravity} & {\tt heat}  & {\tt phillips}  & {\tt shaw}   \\
  \hline
$E_{\rm gsvd}$  &	1.10E-01	&	2.78E-02	&	5.35E-02	&	2.02E-02	&	2.05E-02	&	7.47E-03	&	3.93E-02	\\
$E_{\rm tgsvd}$ in \cite{tikhonov} &	1.03E-01	&	8.93E-02	&	5.61E-02	&	8.78E-02	&	9.61E-02	&	1.13E-01	&	3.75E-02	\\
$E_{\rm rgsvd}$ in \cite{2016}  &	1.10E-01	&	9.54E-02	&	5.51E-02	&	2.02E-02	&	5.08E-02	&	7.19E-03	&	3.98E-02	\\
$E_{\rm rgsvd}$ in \cite{xiang2015}  &	1.24E-01	&	5.12E-02	&	3.38E-02	&	4.27E-02	&	7.43E-02	&	1.02E-01	&	3.61E-02	\\
$E_{\rm mtrsvd}$ in \cite{jia2018}  &	1.03E-01	&	8.51E-02	&	5.60E-02	&	2.40E-02	&	3.51E-02	&	8.68E-02	&	2.37E-02	\\
$E_{\rm alg. \;\ref{alg:alg4} }$ &	1.17E-01	&	5.10E-02	&	1.37E-02	&	2.66E-02	&	5.71E-02	&	2.14E-02	&	3.34E-02	\\
$t_{\rm gsvd}$  &	2.8422 	&	3.0393 	&	2.7792 	&	2.7421 	&	2.6902 	&	2.9127 	&	2.7544 	\\
$t_{\rm tgsvd}$ in \cite{tikhonov} &	2.8044 	&	2.8574 	&	2.6622 	&	2.7328 	&	2.6981 	&	3.0323 	&	2.6721 	\\
$t_{\rm rgsvd}$ in \cite{2016}  &	2.0990 	&	2.1359 	&	2.0535 	&	2.0732 	&	2.0937 	&	2.2601 	&	2.3128 	\\
$t_{\rm rgsvd}$ in \cite{xiang2015}  &	0.2657 	&	0.2580 	&	0.2297 	&	0.3286 	&	0.4807 	&	0.5748 	&	0.2635 	\\
$t_{\rm mtrsvd}$ in \cite{jia2018}  &	0.3444 	&	1.6572 	&	1.9283 	&	0.8469 	&	0.7465 	&	0.5847 	&	1.0637 	\\
$t_{\rm alg.\; \ref{alg:alg4} }$ &	0.0549 	&	0.0747 	&	0.0620 	&	0.0637 	&	0.0699 	&	0.1011 	&	0.0557 	\\
$l_{s}$&	4	&	6	&	3	&	14	&	27	&	32	&	8	\\
$l_{1}\times l_{2}$&	$4\times4$	&	$4\times4$	&	$3\times3$	&	$13\times13$	&	$24\times22$	&	$27\times19$	&	$9\times9$	\\
$\lambda_{\rm gsvd}$  &	1.27E+00	&	5.27E-02	&	8.25E-01	&	2.68E+00	&	1.44E-02	&	1.37E+00	&	4.29E-01	\\
$\lambda_{\rm tgsvd}$ in \cite{tikhonov} &	5.75E-02	&	1.47E-01	&	3.06E-02	&	9.89E-01	&	6.34E+00	&	6.19E-01	&	3.92E-02	\\
$\lambda_{\rm rgsvd}$ in \cite{2016}  &	1.30E+00	&	3.32E-01	&	2.95E+00	&	2.69E+00	&	5.92E-02	&	1.35E+00	&	4.45E-01	\\
$\lambda_{\rm rgsvd}$ in \cite{xiang2015}  &	9.96E-01	&	1.40E-01	&	1.15E+00	&	8.49E-02	&	1.50E-02	&	2.01E-02	&	2.86E-01	\\
$\lambda_{\rm alg. \;\ref{alg:alg4} }$ &	1.02E+00	&	1.01E+00	&	1.72E+00	&	1.76E-01	&	5.10E-02	&	2.45E-01	&	8.83E-02	\\
    \hline
\end{tabular}}
\end{table}

\begin{figure}[!ht]
	\centering
	\subfigure[{\tt baart}]
    {\includegraphics[width=0.4\linewidth,height=0.14\textheight]{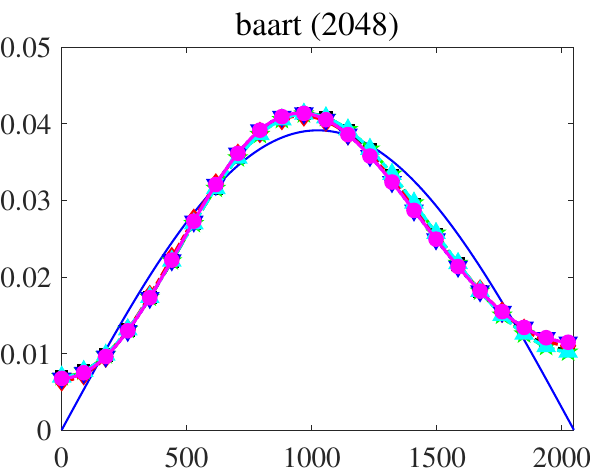}
	}	
	\subfigure[{\tt deriv2}]
    {\includegraphics[width=0.4\linewidth,height=0.14\textheight]{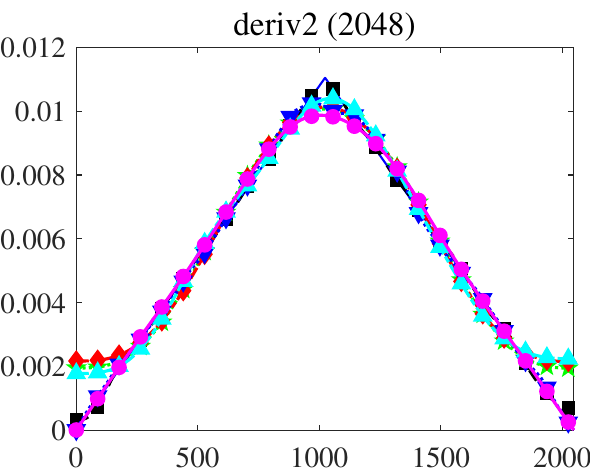}
	}
    \subfigure[{\tt foxgood}]
    {\includegraphics[width=0.4\linewidth,height=0.14\textheight]{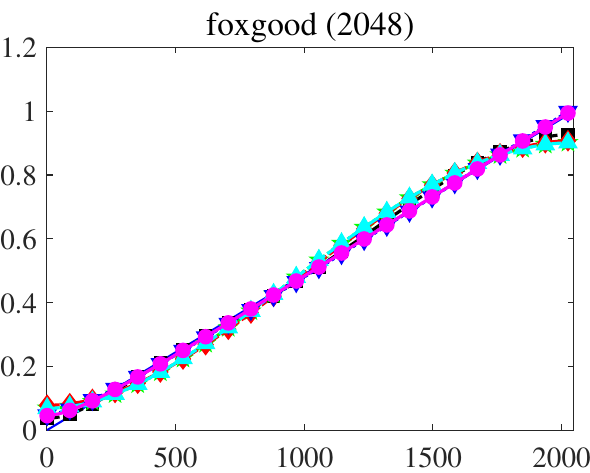}
	}
    \subfigure[{\tt gravity}]
    {\includegraphics[width=0.4\linewidth,height=0.14\textheight]{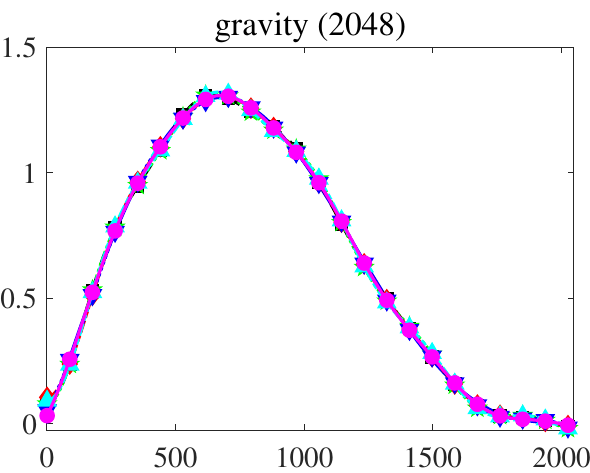}
	}
	\subfigure[{\tt heat}]
    {\includegraphics[width=0.4\linewidth,height=0.14\textheight]{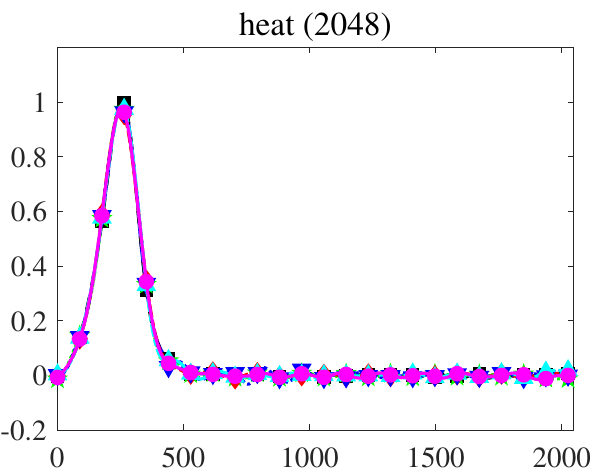}
	}	
	\subfigure[{\tt phillips}]
    {\includegraphics[width=0.4\linewidth,height=0.14\textheight]{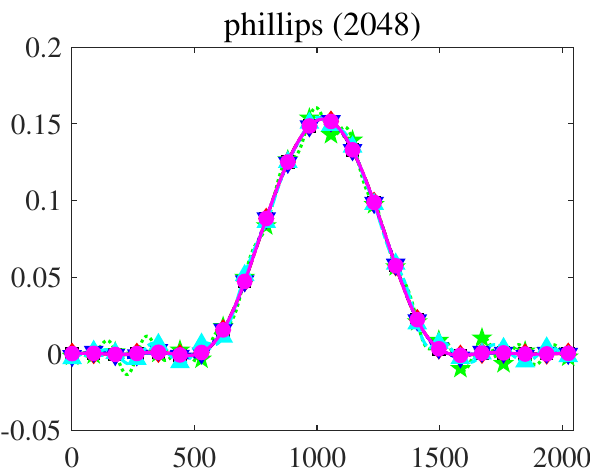}
	}
    \subfigure[{\tt shaw}]
    {\includegraphics[width=0.6\linewidth,height=0.14\textheight]{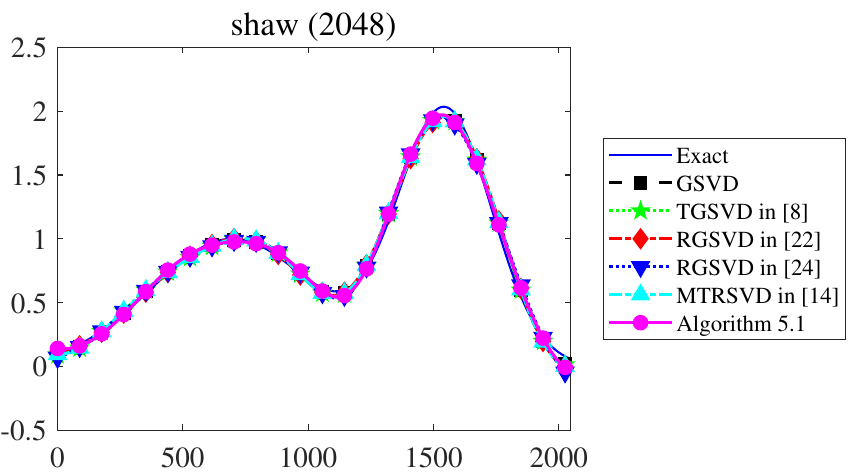}
	}
	\caption{Algorithm \ref{alg:alg4}  (GCV) for  Example \ref{ex1} with $m=n=2048$}
	\label{Fig:image_over_gcv}	
\end{figure}

\begin{figure}[!ht]
	\centering
	\subfigure[{\tt baart}]
    {\includegraphics[width=0.4\linewidth,height=0.14\textheight]{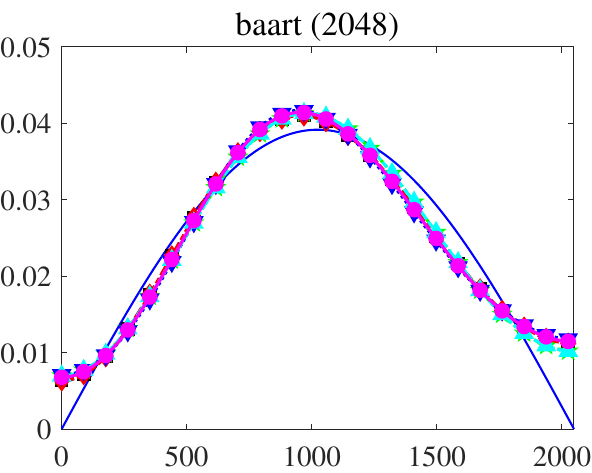}
	}
    \subfigure[{\tt deriv2}]
    {\includegraphics[width=0.4\linewidth,height=0.14\textheight]{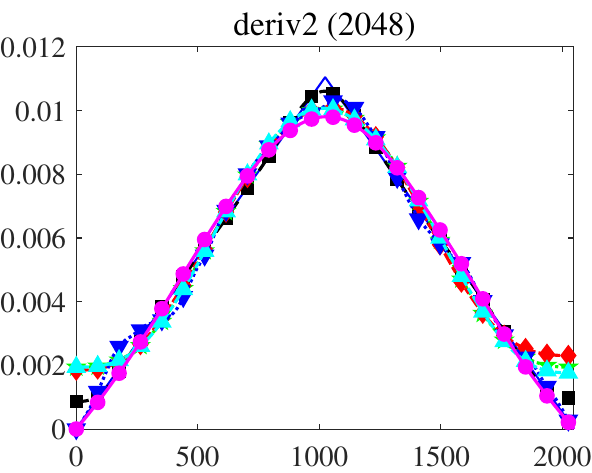}
	}
	\subfigure[{\tt foxgood}]
    {\includegraphics[width=0.4\linewidth,height=0.14\textheight]{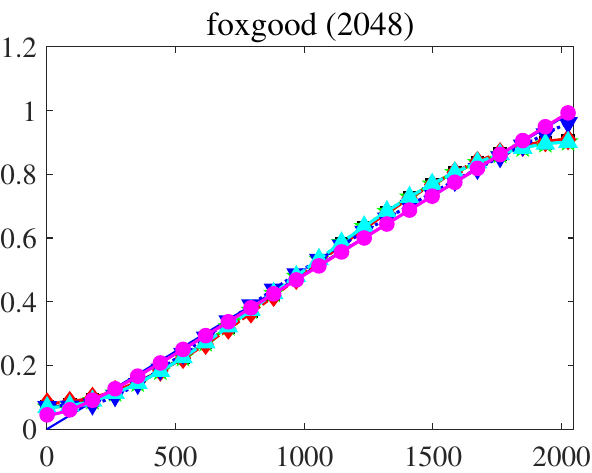}
	}
    \subfigure[{\tt gravity}]
    {\includegraphics[width=0.4\linewidth,height=0.14\textheight]{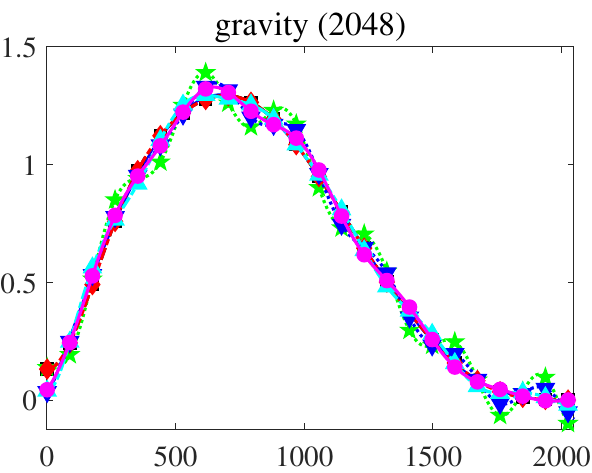}
	}
	\subfigure[{\tt heat}]
    {\includegraphics[width=0.4\linewidth,height=0.14\textheight]{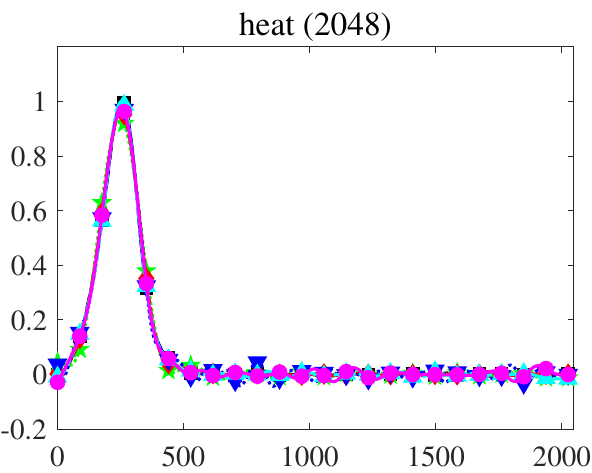}
	}	
	\subfigure[{\tt phillips}]
    {\includegraphics[width=0.4\linewidth,height=0.14\textheight]{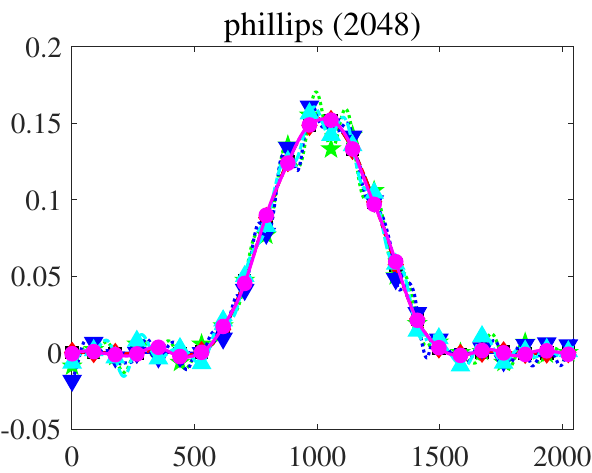}
	}
    \subfigure[{\tt shaw}]
    {\includegraphics[width=0.6\linewidth,height=0.14\textheight]{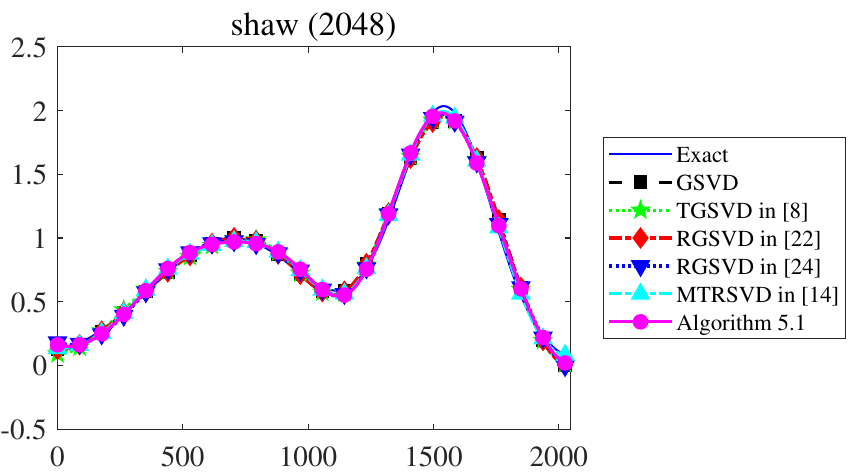}
	}
	\caption{Algorithm \ref{alg:alg4}  (L-curve) for  Example \ref{ex1} with $m=n=2048$}
	\label{Fig:image_over_lcurve}	
\end{figure}

In Tables \ref{Table:image_over_gcv}--\ref{Table:image_over_lcurve} and Figures \ref{Fig:image_over_gcv}--\ref{Fig:image_over_lcurve}, we report the numerical results for Example \ref{ex1}. We can observe  from the first six lines in Tables \ref{Table:image_over_gcv}--\ref{Table:image_over_lcurve} and Figures \ref{Fig:image_over_gcv}--\ref{Fig:image_over_lcurve} that all algorithms produce acceptable approximate solutions with the GCV and L-curve methods. Since the  GCV and L-curve methods are comparable, we use the GCV method for selecting the parameter $\lambda$ in the later numerical tests.
 We also see that Algorithm \ref{alg:alg4} demonstrates  much efficiency over the other randomized algorithms. When the numerical rank of $A$ is large (e.g., {\tt heat} and {\tt phillips}), the  other randomized algorithms still need to solve a large-scale    problem while Algorithm \ref{alg:alg4} only needs to solve a small-scale problem. One may reduce the number of samples by increase  
$\epsilon$ and the solution accuracy may be affected. How to choose an optimal $\epsilon$ needs  further study.

\begin{example} \label{ex2}
{\rm (underdetermined case from Hansen Tools \cite{Hansen})}
We consider the test examples  {\tt baart}, {\tt deriv2}, {\tt foxgood}, {\tt gravity}, {\tt heat} and {\tt shaw} and in Example \ref{ex1} and the underdetermined example {\tt parallax}. Also,  the vector $\bb$ is constructed as in Example \ref{ex1}.
\end{example}

\begin{table}[!ht]
 \centering 
 \begin{threeparttable}{ \scriptsize
\caption{The comparison results (GCV) for Example \ref{ex2} with $n=2048$}
\label{Table:image_under_gcv}
\begin{tabular}{l|ccccccc} 
  \hline
  $n=2048$ & {\tt baart}  & {\tt deriv2} & {\tt foxgood} & {\tt gravity} & {\tt heat}  & {\tt parallax}  & {\tt shaw}   \\
  \hline
$E_{\rm gsvd}$  &	1.02E-01	&	1.70E-02	&	2.91E-02	&	1.41E-02	&	2.95E-02	&	--	&	4.48E-02	\\
$E_{\rm tgsvd}$ in \cite{tikhonov} &	1.03E-01	&	8.93E-02	&	5.56E-02	&	1.78E-02	&	4.06E-02	&	--	&	5.31E-02	\\
$E_{\rm rgsvd}$ in \cite{2016}  &	1.15E-01	&	3.46E-02	&	1.24E-02	&	1.09E-02	&	8.79E-02	&	--	&	4.98E-02	\\
$E_{\rm rgsvd}$ in \cite{xiang2015}  &	1.10E-01	&	3.32E-02	&	1.11E-02	&	1.41E-02	&	7.90E-02	&	--	&	4.54E-02	\\
$E_{\rm mtrsvd}$ in \cite{jia2018}  &	1.03E-01	&	8.52E-02	&	5.58E-02	&	1.69E-02	&	4.00E-02	&	-- &	4.99E-02	\\
$E_{\rm alg. \;\ref{alg:alg4} }$ &	1.08E-01	&	4.35E-02	&	2.11E-02	&	6.27E-03	&	6.33E-02	&	--	&	4.93E-02	\\
$t_{\rm gsvd}$  &	2.8439 	&	3.0824 	&	2.8311 	&	2.8037 	&	2.7647 	&	2.1806 	&	2.8295 	\\
$t_{\rm tgsvd}$ in \cite{tikhonov} &	2.7643 	&	3.3479 	&	2.8672 	&	2.6540 	&	2.8217 	&	2.1437 	&	2.7086 	\\
$t_{\rm rgsvd}$ in \cite{2016}  &	0.2665 	&	0.3987 	&	0.2452 	&	0.2822 	&	0.4396 	&	0.0688 	&	0.2503 	\\
$t_{\rm rgsvd}$ in \cite{xiang2015}  &	0.2674 	&	0.3031 	&	0.2233 	&	0.3463 	&	0.5020 	&	0.0707 	&	0.2714 	\\
$t_{\rm mtrsvd}$ in \cite{jia2018}  &	0.3334 	&	2.0809 	&	2.0612 	&	0.8413 	&	0.7856 	&	1.0518 	&	1.0861 	\\
$t_{\rm alg.\; \ref{alg:alg4} }$ &	0.0719 	&	0.0772 	&	0.0852 	&	0.0590 	&	0.0683 	&	0.0403 	&	0.0556 	\\
$l_{s}$&	4	&	5	&	3	&	13	&	24	&	9	&	8	\\
$l_{1}\times l_{2}$&	$4\times4$	&	$5\times5$	&	$3\times3$	&	$12\times12$	&	$20\times20$	&	$8\times8$	&	$8\times8$	\\
$\lambda_{\rm gsvd}$  &	3.29E-01	&	1.54E-02	&	1.13E-01	&	7.24E-01	&	7.93E-03	&	9.69E-01	&	1.33E-01	\\
$\lambda_{\rm tgsvd}$ in \cite{tikhonov} &	1.26E-02	&	2.23E-01	&	1.96E-01	&	3.35E-01	&	4.05E-02	&	3.11E-01	&	1.84E-02	\\
$\lambda_{\rm rgsvd}$ in \cite{2016}  &	1.01E+00	&	2.30E-01	&	1.84E+00	&	1.92E-01	&	1.23E-02	&	4.41E-01	&	3.53E-01	\\
$\lambda_{\rm rgsvd}$ in \cite{xiang2015}  &	1.04E+00	&	1.32E-01	&	1.74E+00	&	1.97E-01	&	3.09E-02	&	1.95E-01	&	3.65E-01	\\
$\lambda_{\rm alg. \;\ref{alg:alg4} }$ &	1.04E+00	&	2.65E-01	&	1.92E+00	&	4.60E-01	&	7.13E-02	&	1.42E+00	&	3.62E-01	\\
    \hline
\end{tabular}}
\end{threeparttable}
\end{table}

\begin{figure}[!htbp]
	\centering
	\subfigure[{\tt baart}]
    {\includegraphics[width=0.4\linewidth,height=0.14\textheight]{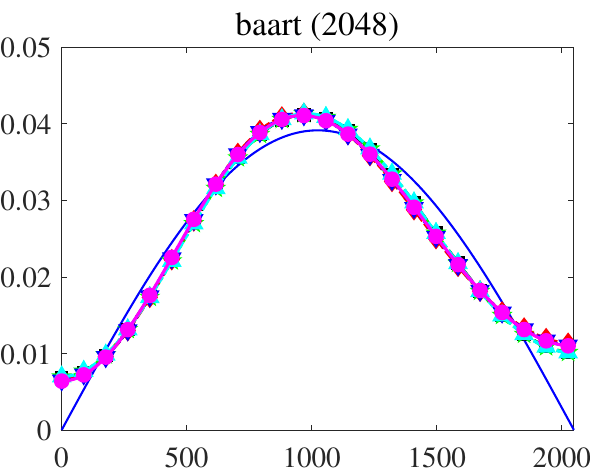}
	}
    \subfigure[{\tt deriv2}]
    {\includegraphics[width=0.4\linewidth,height=0.14\textheight]{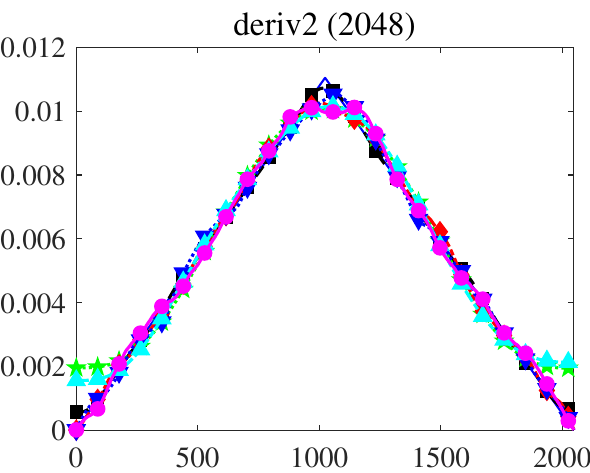}
	}
	\subfigure[{\tt foxgood}]
    {\includegraphics[width=0.4\linewidth,height=0.14\textheight]{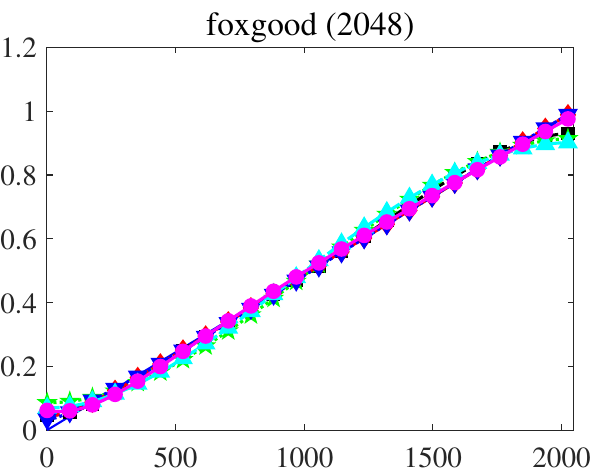}
	}
    \subfigure[{\tt gravity}]
    {\includegraphics[width=0.4\linewidth,height=0.14\textheight]{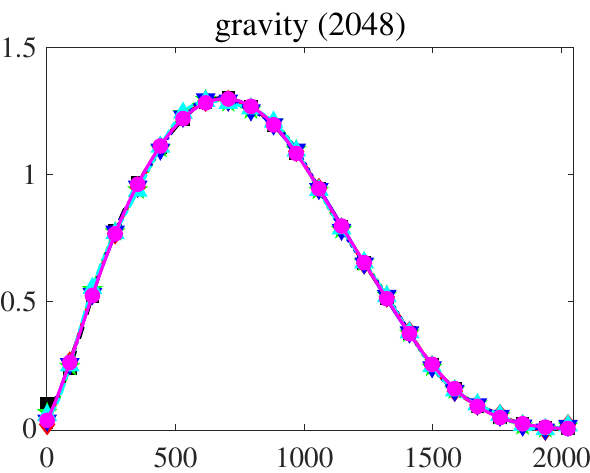}
	}
	\subfigure[{\tt heat}]
    {\includegraphics[width=0.4\linewidth,height=0.14\textheight]{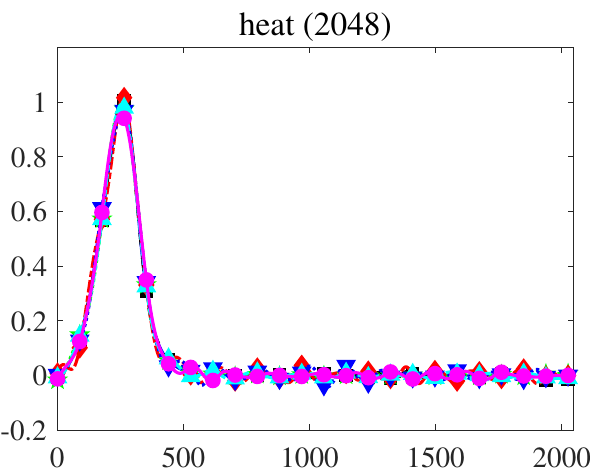}
	}	
	\subfigure[{\tt parallax}]
    {\includegraphics[width=0.4\linewidth,height=0.14\textheight]{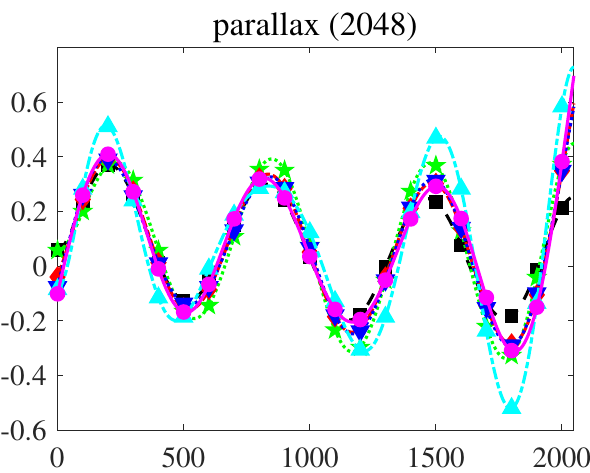}
	}
    \subfigure[{\tt shaw}]
    {\includegraphics[width=0.6\linewidth,height=0.14\textheight]{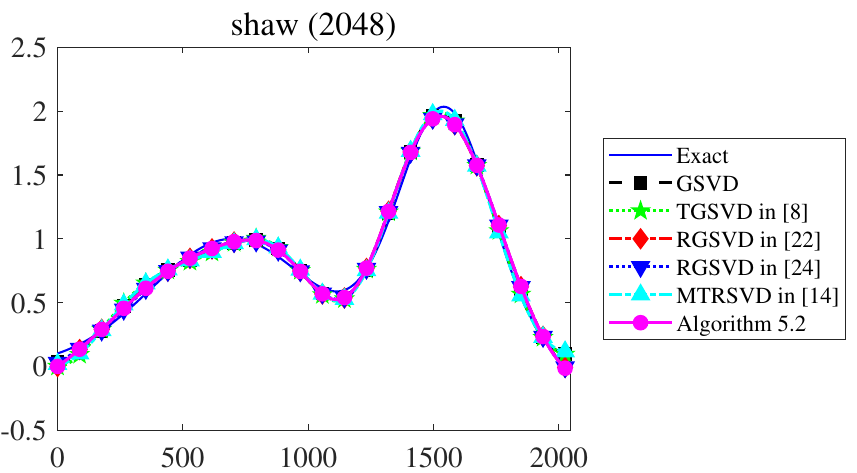}
	}
    \caption{Algorithm \ref{alg:alg5} (GCV) for  Example \ref{ex2} with $n=2048$}
	\label{Fig:image_under_gcv}	
\end{figure}

The numerical results of Example \ref{ex2} are reported in Table \ref{Table:image_under_gcv} and Figure \ref{Fig:image_under_gcv}. Here, the marker ``--" in Table \ref{Table:image_under_gcv}  means that  the exact solution is not available for {\tt parallax}. We see from   Table \ref{Table:image_under_gcv} and Figure \ref{Fig:image_under_gcv} that Algorithm \ref{alg:alg5} is  comparable to the other randomized algorithms in terms of the relative error while Algorithm \ref{alg:alg5} works much efficiently over the randomized algorithms. 

\begin{example} \label{ex3}
This function creates a two-dimensional  phantom test problem from AIR tools \cite{AIR}\footnote{\url{https://github.com/jakobsj/AIRToolsII}}. The size is $N\times N (N=50)$ and the pixel values are between $0$ and $1$. We generate $\bx$ by $\bx ={\tt im(:)}$, where ${\tt im = phantomgallery('ppower', N)}$. Here,  `{\tt ppower}'  means a random image with patterns of nonzero pixels. Then the matrix $A$ is generated by  $A= {\tt paralleltomo(N,theta,p)}$ with specified angles {\tt theta = 0:12:179} and {\tt p = 4*N} parallel rays. Let $\bb=A\bx$. We note that the dimension of $A$ is ${\tt length(theta) \cdot p \times N^2}=3000\times 2500$ and thus   $A\bx=\bb$ is overdetermined. 

\end{example}

\begin{table}[htbp]
\centering {\scriptsize
	\caption{Algorithm \ref{alg:alg4} for Example \ref{ex3}:  {\tt paralleltom} $(3000 \times 2500)$}
\label{example3} 
\begin{tabular}{ccccc}
  \hline
   Method &   $\lambda$ & Relative error &Running time & $l_s/(l_1\times l_2)$\\
   \hline
  GSVD                       & 5.17E-11   & 0.2567 & 6.75 s & --\\
  TGSVD in \cite{tikhonov}   & 5.17E-11   & 0.2568 & 9.85 s & 940\\
  RGSVD in \cite{2016}       & 7.44E-03   & 0.2571 & 5.92 s & 940\\
  RGSVD in \cite{xiang2015}  & 2.58E-03   & 0.2340 & 2.27 s & 940\\
  MTRSVD in \cite{jia2018}   & --       & 0.2569 & 4.92 s & 940\\
  Algorithm \ref{alg:alg4}   & 2.53E-03   & 0.2339 & 0.88 s & $941\times 941$\\
         \hline
\end{tabular}}
\end{table}

\begin{figure}[htbp]
	\centering
	\subfigure[Exact image]{
		\includegraphics[width=4cm,height=0.14\textheight]{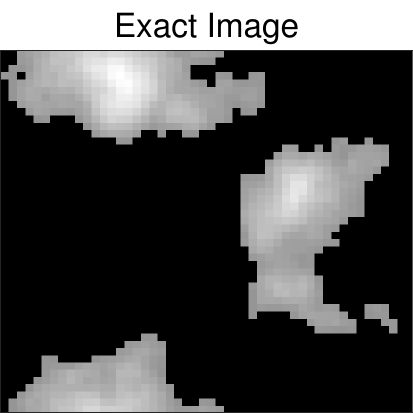}
	}	
	\subfigure[GSVD]{
		\includegraphics[width=4cm,height=0.14\textheight]{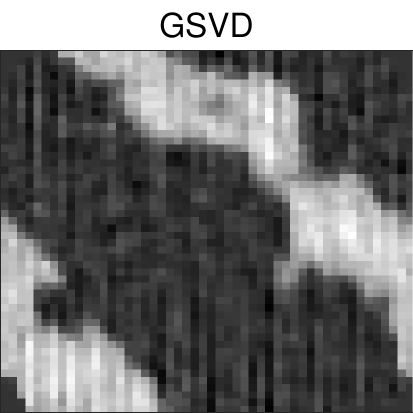}
	}	
	\subfigure[TGSVD in \cite{tikhonov}]{
		\includegraphics[width=4cm,height=0.14\textheight]{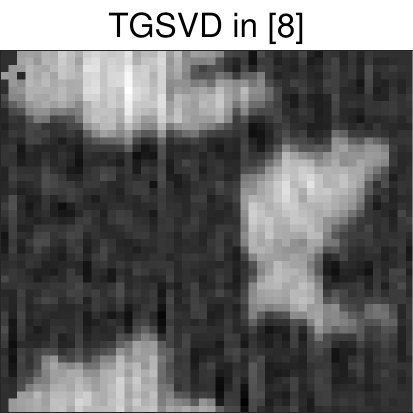}
	}	
	\subfigure[RGSVD in \cite{2016}]{
		\includegraphics[width=4cm,height=0.14\textheight]{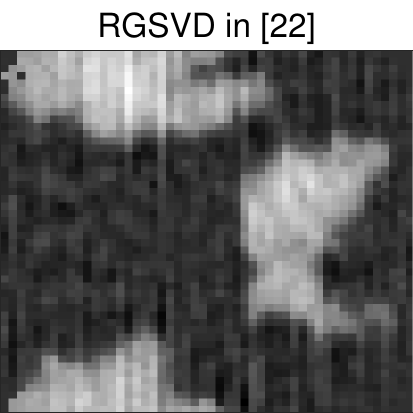}
	}
	\subfigure[RGSVD in \cite{xiang2015}]{
		\includegraphics[width=4cm,height=0.14\textheight]{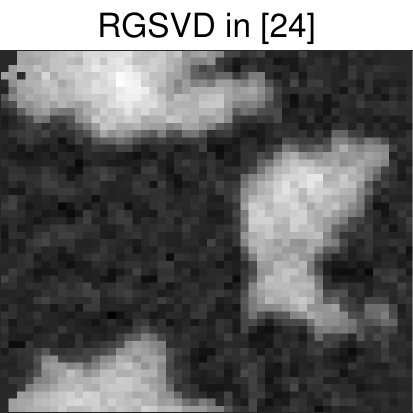}
	}
	\subfigure[MTRSVD in \cite{jia2018}]{
		\includegraphics[width=4cm,height=0.14\textheight]{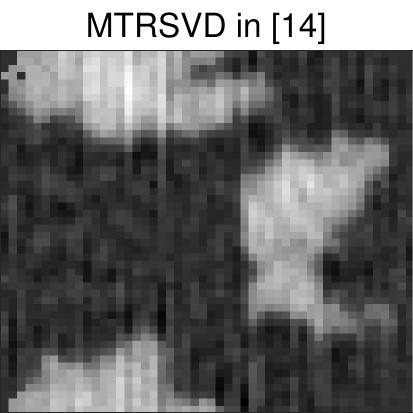}
	}
    \subfigure[Alg. \ref{alg:alg4}]{
		\includegraphics[width=4cm,height=0.14\textheight]{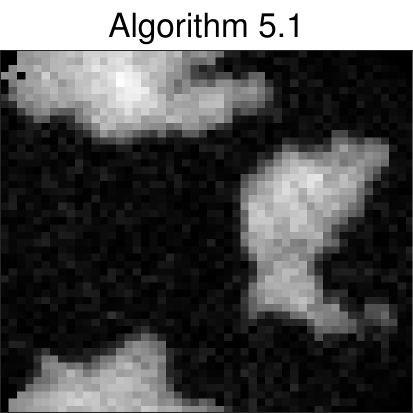}
	}
\caption{Images of Example \ref{ex3}:  {\tt ppower} $(3000 \times 2500)$}\label{fig3}
\end{figure}

The numerical results of Example \ref{ex3} are shown in Table \ref{example3} and the true and approximate solutions of Example \ref{ex3} are displayed in Figure \ref{fig3}. 
Here, the marker ``--" in Table  \ref{example3}  means that  the corresponding value   is not available. We see from  Table \ref{example3} that  Algorithm \ref{alg:alg4}  is much efficient over the other randomized algorithms with comparable relative errors. We also observe from Figure \ref{fig3} that it seems that the approximate solution computed by  Algorithm \ref{alg:alg4} is more robust than the other algorithms.

\section{Concluding remarks}\label{sec7}
In this paper, a two-sided uniformly randomized GSVD algorithm  is proposed for solving the large-scale discrete ill-posed problem with the general Tikhonov regularization. By using two-sided uniform random sampling,  the proposed randomized GSVD algorithm can reduce the scale of the ill-posed problem and thus improve the efficiency in terms of computing time and memory storage. Numerical experiments show the  efficiency of the proposed algorithm.

\end{document}